\documentclass[12pt,a4paper,twoside]{article}

\usepackage{amsfonts,amsmath,amssymb,amsthm,latexsym}
\usepackage{times,mathptm}
\usepackage{handelingen}

\setlength{\textwidth}{160mm}
\setlength{\textheight}{240mm}
\setlength{\parindent}{8mm}
\setlength{\headheight}{0pt}
\setlength{\headsep}{0pt}
\setlength{\topmargin}{1mm}
\setlength{\oddsidemargin}{-.10pt}
\setlength{\evensidemargin}{-1.0pt}
\setlength{\parindent}{0pt}

\newtheorem{proposition}{Proposition}[section]
\newtheorem{lemma}[proposition]{Lemma}
\newtheorem{cor}[proposition]{Corollary}
\newtheorem{theorem}[proposition]{Theorem}

\theoremstyle{definition}

\theoremstyle{remark}
\newtheorem{remark}[proposition]{Remark}

\newcommand{\selabel}[1]{\label{se:#1}}

\newcommand{\ncm}{\newcommand}

\newcommand{\ch}{\chi}
\newcommand{\Lam}{\Lambda}
\newcommand{\bn}{\begin}
\newcommand{\al}{\alpha}
\newcommand{\dw}{\downarrow}
\newcommand{\uw}{\uparrow}
\newcommand{\mtr}{\mathrm}
\newcommand{\ot}{\otimes}

\ncm{\beq}{\begin{equation}}
\ncm{\eeq}{\end{equation}}



\def\C{\mathbb{C}\,}
\def\Z{\mathbb{Z}\,}
\def\N{\mathbb{N}\,}


\def\id{\mbox{\rm id}}

\def\deg{\mbox{\rm deg}\,}

\def\into{\hookrightarrow}
\def\to{\rightarrow}

\def\End{\mbox{\rm End}\,}

\def\Hom{\mbox{\rm Hom}\,}
\def\Ind{\mbox{\rm Ind}}
\def\Res{\mbox{\rm Res}}
\def\Coind{\mbox{\rm CoInd}}
\def\|{\, | \, }
\def\bra{\langle}
\def\ket{\rangle}

\ncm{\rarr}[1]{\stackrel{#1}{\longrightarrow}}
\ncm{\larr}[1]{\stackrel{#1}{\longleftarrow}}

\def\eps{\varepsilon}

\def\du1{\hat 1}

\def\-1{_{(-1)}}
\def\0{_{(0)}}
\def\1{_{(1)}}
\def\2{_{(2)}}
\def\3{_{(3)}}

\def\du1{\hat 1}

\setcounter{page}{29} 
\setcounter{footnote}{0} 
\renewcommand{\thefootnote}{\fnsymbol{footnote}}
\title{\MakeUppercase{Subgroups of depth three and more}}

\author{Sebastian Burciu$^{1}$ and Lars Kadison$^{2}$}
\date{}
\renewcommand{\date}{\vspace{-5mm}}
\begin{document}
\maketitle \vspace*{-3mm}\relax
\renewcommand{\thefootnote}{\arabic{footnote}}

\noindent \textit{\small $^1$ Inst.\ of Math.\ ``Simion Stoilow" of the Romanian Academy
P.O. Box 1-764,\\ Bucharest, Romania, RO-014700\\
e-mail: sebastian.burciu@imar.ro}\\
\noindent \textit{\small $^2$ Department of Mathematics, University of Pennsylvania,
David Rittenhouse Lab, 209 S. 33rd St. \\
Philadelphia, PA 19104\\ e-mail: lkadison@math.upenn.edu}

\begin{abstract}
\noindent A subalgebra pair of semisimple complex algebras $B \subseteq A$ with inclusion matrix $M$ is depth two iff  $MM^t M \leq nM$ for some positive integer
$n$ and all corresponding entries.  If $A$ and $B$ are the
group algebras of finite group-subgroup pair $H < G$,
 the induction-restriction table equals
$M$ and $S = MM^t$ satisfies $S^2 \leq nS$ iff the subgroup
$H$ is depth three in $G$; similarly
depth $n > 3$ by successive right multiplications of this inequality with alternately $M$ and $M^t$.
We show that a Frobenius complement in a Frobenius group
is a nontrivial class of examples of depth three subgroups.  A tower of Hopf algebras $A \supseteq B \supseteq C$ is shown to be depth-3 if $C \subset \mtr{core}(B)$; and this is 
also a necessary condition if $A$, $B$ and $C$
are group algebras.
\end{abstract}

\section*{Introduction}
Induction of characters from a subgroup to a group is a useful technique for completing character tables \cite{I}
found by nineteenth century algebraists.  At about the same time, Frobenius discovered reciprocity, which in modern terms
states that induction is naturally isomorphic to coinduction of $G$-modules, either forming an adjoint pair with the restriction functor,
and applies to any Frobenius extension of algebras.

Finite index subfactors are a certain type of Frobenius extension, where an analytic notion of finite depth was discovered in connection with classification, with depth two being part of a remarkable type of Galois theory of paragroups.  The notion of finite depth was eventually made algebraic
and applied to Frobenius extensions; later, depth two and its Galois
theory of quantum groupoids and Hopf algebroids were exposed in simplest terms for ring extensions (see \cite{Sz} for an application to J.~Roberts field algebra construction \cite{R}).

It was noted in \cite{KK} that the  notion of depth two
applies to characters of a finite group and subgroup pair via complex group algebras:
a subgroup is depth two if  no new constituents arise
when inducing-restricting-inducing a character as compared with  inducing just
one time. By means of general theory in one direction and Mackey theory
in the other,  depth two subgroup is shown to be precisely a normal subgroup \cite{KK}.  (A similar statement is true for semisimple Hopf $\C$-subalgebras \cite{BK}.)
In this paper we generalize this approach to depth two subgroup to a semisimple
subalgebra pair, giving a condition in terms of inclusion matrix
\cite{GHJ},
which is the same as a induction-restriction table \cite{AB} up
to a permutation change of basis. The depth two condition is essentially that
the cube of the inclusion matrix is less entrywise than a multiple
of the inclusion matrix, noted more precisely in the abstract
and Proposition~\ref{prop-d2inclusion} below.

In \cite{LK2008} it was shown that finite depth Frobenius extension
has a simplified definition in terms of a generalization of
depth two to a tower of three algebras in the Jones tower. In this
paper we extend a particular case of an embedding theorem in \cite{LK2008} to characterization of certain finite depth separable
Frobenius extension in terms of depth two extension in Jones tower
(see Theorems~\ref{th-d3} and~\ref{th-hd} below).  Then one may check that
a subgroup is depth three or more by comparing cube of symmetric
matrix $\mathcal{S}$ of inner products of induced irreducible characters with multiples of $\mathcal{S}$ (see Prop.~\ref{prop-cube}). In somewhat the same spirit,   Corollary~\ref{cor-nonew} below implies that a subgroup is depth three
if no new constituents arise from applying restriction-induction
one extra time to a character.

Although amusing to test for depth three property from character tables of groups and non-normal subgroup,
 it is not clear from this definition
what precisely a depth three subgroup is.  A number of proposals to remedy this are given below:  depth three quasi-bases are given
in Theorem~\ref{th-d2qb},  a characterization of certain depth three Frobenius
extension in terms of similar bimodules, tensor-square and overalgebra in Theorem~\ref{th-tensorsquare}, and a class of examples in
 Section~3, a Frobenius group and its Frobenius complement.  Even the notion of depth-3 tower of algebras may be
viewed as an alternative to defining finite depth in terms of iterated endomorphism algebra  extensions (perhaps applied instead to an  iteration of another useful construction). Depth-3 towers of finite group algebras are completely classified in Theorem \ref{d3grptw} following the spirit of \cite{LK2008}. Depth-3 towers of Hopf algebras are also considered at the end of the second section. A tower of Hopf algebras $A \supseteq B \supseteq C$ is depth-3 if $C \subseteq \mtr{core}(B)$ (see subsection \ref{hophtw} for the definition of the core of a Hopf subalgebra). Using then notion of kernel of a module introduced in \cite{B} we formulate a conjecture on the $core$ of a Hopf subalgebra. This conjecture would imply that the condition $C \subseteq \mtr{core}(B)$ is also a necessary condition for the Hopf algebra tower $A \supseteq B \supseteq C$ to be depth-3 (which is true for group algebras
by the Theorem~\ref{d3grptw} below). Although our algebras are often over the complex numbers, the paper is hopefully written in
a change-of-characteristic-friendly way.

\section{Preliminaries on depth two extensions}\selabel{1}

All algebras in this paper are associative algebras (not necessarily commutative) over a field
$k$.  Given an $(A,A)$-bimodule $M$, we let
$M^A$ denote the $A$-central elements
$\{ m \in M \| \forall \, a \in A, am = ma \}$.

Two $r \times s$  matrices $M$ and $N$ of non-negative integers satisfy $M \leq N$ if
each of the coefficients $m_{ij} \leq n_{ij}$: this property is independent of permutation of bases. Note that if $X$ is a third $q \times r$
matrix of non-negative integers, then $XM \leq XN$; if $X$ is $s \times q$, then $MX \leq NX$. We say $M$ is strictly positive if
all entries $m_{ij} > 0$.

\subsection{Frobenius extensions}
\label{subsec-fe} A Frobenius extension $A \| B$ is an extension of associative algebras where the
natural bimodule ${}_BA_A$ is isomorphic
to the $(B,A)$-bimodule $\Hom (A_B, B_B)$
(of right $B$-module homomorphisms) given
by $(b\cdot f \cdot a)(x) =bf(ax)$
for $a,x \in A, b \in B, f\in \Hom (A_B, B_B)$.  This is equivalent to the
existence of a mapping $F \in \Hom ({}_BA_B, {}_BB_B)$ with dual bases $\{ x_i \}_{i=1}^n$ and $\{ y_i \}_{i=1}^n$
such that $\sum_{i=1}^n F(ax_i)y_i = a$
and $\sum_{i=1}^n x_iF(y_i a) = a$ for
all $a \in A$: we call the data system $F$ a Frobenius homomorphism with
dual bases $\{ x_i \}$, $\{ y_i \}$.

For example, a group algebra $A = k[G]$
is a Frobenius extension of any subgroup
algebra $B=k[H]$, where $H \leq G$ is a subgroup of finite index $[G: H] = n$.
For if $\{ g_i \}_{i=1}^n$ denotes left
coset representatives of $H$ in $G$,
where $g_1 = 1_G$,
a Frobenius system is given by $x_i = g_i^{-1}$, $y_i = g_i$ with  bimodule projection (then also split extension)
given by ($n_{hg_i} \in k$)
\begin{equation}
F(\sum_{i=1}^n \sum_{h \in H} n_{hg_i}  hg_i) = \sum_{h \in H} n_h h,
\end{equation}
a routine exercise.

A Frobenius extension $A \| B$ enjoys isomorphic tensor-square and endomorphism ring as $(A,A)$-bimodules.  We note that
$A \otimes_B A \cong \End A_B$ via
$x \otimes_B y \mapsto \lambda(x) \circ F \circ \lambda(y)$.  Also $A \otimes_B A \cong \End {}_BA$ via $x \otimes y \mapsto
\rho(y) \circ F \circ \rho(x)$  \cite{NEFE}.  Composing
the two isomorphisms we obtain an anti-isomorphism  $\End A_B \to \End {}_BA$
given by
$ f \mapsto \sum_i F(-f(x_i))y_i$,
which restricts to an anti-automorphism
on the subring $\End {}_BA_B$, and plays
the role of antipode in
case of depth two Frobenius extension
defined below.

\subsection{Separable extensions} If the characteristic of the ground field
$k$ is coprime to $[G: H] = n$, then
the extension of group algebras $A \| B$ noted above is a separable extension:
i.e., the multiplication map $\mu: A \otimes_B A \rightarrow A$ is a split $(A,A)$-epimorphism.  The image of $1_A$
under a section $A \to A \otimes_B A$
is a separability element $e= \sum_{i=1}^n e_i \otimes_B f_i$  satisfying $ae= ea$
for all $a \in A$ and $\mu(e) = \sum_{i=1}^n e_if_i = 1_A$, which characterizes separable extension.  Notice
that \begin{equation}
\frac{1}{[G: H]} \sum_{i=1}^n g_i^{-1} \otimes_B g_i
\end{equation}
is a separability element for the group
algebras $A$ over $B$.

In the situation that $C \supseteq A \supseteq B$ is a tower of algebras and $A \| B$ is a separable extension, the canonical epi $C \otimes_B C \to C \otimes_A C$ given by $c_1 \otimes_B c_2 \mapsto c_1 \otimes_A c_2$ splits.
A section for this mapping is of course
given by $c_1 \otimes_A c_2 \mapsto
\sum_{i=1}^n c_1 e_i \otimes_B f_i c_2$.

\subsection{Depth-3 towers of algebras} A  tower of three algebras
$A \supseteq B \supseteq C$, where $C$ is
a unital subalgebra of $B$ which is in turn unital subalgebra of $A$,
is said to be \textit{right depth-3},
or right d-3, if
there is a complementary $(A,C)$-bimodule
$P$ and $n \in \N$ such that
\begin{equation}
\label{eq: D2}
A \otimes_B A \, \oplus P \cong A^n
\end{equation}
 as natural $(A,C)$-bimodules.  Equivalently, there is a split $(A,C)$-bimodule epimorphism from a finite
direct sum of $A$ with itself to $A \otimes_B A$  ($P$ is the kernel of such an epi).

Left d-3 towers are defined oppositely,
so that $A \supseteq B \supseteq C$ is
left d-3 iff the tower of opposite algebras $A^{\rm op} \supseteq B^{\rm op} \supseteq C^{\rm op}$
is right d-3.  It has been noted in \cite{LK2008, CK} that
if $A \| B$ is a Frobenius, or quasi-Frobenius (QF, where isomorphisms above are replaced by similarity of bimodules) extension, then left d-3
is equivalent with right d-3 extension.

\subsubsection{Depth-3 towers of semisimple algebras}\label{d3ss}
Suppose a tower $A \supseteq B \supseteq C$ of semisimple finite dimensional $k$-algebras is right d-3.
Tensoring eq.~(\ref{eq: D2}) by $-\ot_CM$, we obtain the following inequality:   $$<M\uw^A_C\dw^A_B\uw_B^A,\;Q>\ \leq \ n<M\uw^A_C,\;Q>$$
which holds for any simple $C$-module $M$ and any simple $A$-module $Q$.

Using this relation a necessary and sufficient condition for a tower of groups to be depth-3 will be given in the next theorem.
For $H$ a subgroup of $G$ let $$\mathrm{core}_G(H)=\cap_{g \in G}\;^gH$$ be the largest subgroup of $H$ which is normal in $G$. (Here $^gH=gHg^{-1}$.)

Let $G \supseteq N \supseteq H$ be a tower of groups. Since the normal closure $H^G$ is the subgroup of $G$ generated by the elements $ghg^{-1}$ with $g \in G$ and $h \in H$ note that $H \subseteq \mathrm{core}_G(N)$ if and only if $H^G \subseteq N$.

\bn{theorem}\label{d3grptw}
A tower $G \supseteq N \supseteq H$ of groups is depth-3 if and only if $H \subset \mathrm{core}_G(N)$.
\end{theorem}

\bn{proof}

If $H \subset \mathrm{core}_G(N)$ then $H^G \subseteq N$ and the proof of Theorem 3.1 from \cite{LK2008} applies.

Suppose now that the tower is depth-3. The above argument for the tower $kG \supseteq kN \supseteq kH$ of semisimple algebras implies that there is $n \in \mathbb{N}$ such that

$$<\al\uw^G_H\dw^G_N\uw_N^G,\;\mu>\leq n<\al\uw^G_H,\;\mu>$$ for any characters $\al \in \mtr{Irr}(H)$ and $\mu \in \mtr{Irr}(G)$.

Put $\mu={1_{ _G}}$, the trivial character in the above inequality. Since $<\al\uw^G_H,\;{1_{ _G}}>=<\al,\;{1_{ _H}}>$ it follows that $<\al\uw^G_H\dw^G_N\uw_N^G,\;{1_{ _G}}>=0$ if $\al \neq {1_{ _H}}$. By Frobenius reciprocity this implies that $<\al\uw^G_H\dw^G_N,\;{1_{ _N}}>=0$ if $\al \neq {1_{ _H}}$.

On the other hand applying Mackey's theorem one has:
\bn{eqnarray*}
0=<\al\uw^G_H\dw^G_N,\;{1_{ _N}}> \! & = &  \! \sum_{NgH \in N\backslash G/H}<\;^g\al\dw_{N\cap \;^gH}^{^gH}\uw^N_{N\cap \;^gH},\;{1_{ _N}}>
 \\& = & \sum_{NgH \in N\backslash G/H}<\;^g\al\dw_{N\cap \;^gH}^{^gH},\;1_{ _{N\cap \;^gH}}>
\\& = & \sum_{NgH \in N\backslash G/H}<\al\dw_{^{g^{-1}}N\cap \;H}^{H},\;1_{ _{^{g^{-1}}N\cap \;H}}>
\\& = & \sum_{NgH \in N\backslash G/H}<\al,\;1_{ _{^{g^{-1}}N\cap \;H}}\uw_{^{g^{-1}}N\cap \;H}^{H}>
\end{eqnarray*}
On the other hand using Frobenius reciprocity again one has $$<{1_{ _H}},\;1_{ _{^{g^{-1}}N\cap \;H}}\uw_{^{g^{-1}}N\cap \;H}^{H}>=<1_{ _{^{g^{-1}}N\cap \;H}},\;1_{ _{^{g^{-1}}N\cap \;H}}>=1$$

Thus $$1_{ _{^{{g^{-1}}}N\cap \;H}}\uw_{^{g^{{-1}}N}\cap \;H}^{H}={1_{ _H}}$$ which implies that $H=\;^{g^{-1}}N\cap \;H$ or $H \subset \;^{g^{{-1}}}N=g^{-1}Ng$. Thus $H \subset \mathrm{core}_G(N)$.
\end{proof}

\subsection{Depth two algebra extensions} An algebra extension $A \supseteq B$ is defined
to be \textit{right depth two} (equivalently,
subalgebra $B \subseteq A$ is rD2) if the partially
trivial tower $A \supseteq B \supseteq B$
is right d-3; similarly we define
left D2 in terms of partially trivial
left d-3 tower.

It is obvious that a finite dimensional algebra $A$ is a depth two
extension of its unit subalgebra $B = k1_A$:  if $\dim_k A = n$,
then of course ${}_AA \otimes_k A \cong {}_AA^n$.  Similarly,  we
may show that if $C$ is a finite dimensional dimensional algebra,   the tensor algebra
$A = C \otimes B$ is a depth two extension of its subalgebra
$B = 1_C \otimes B$.

The main examples in the literature of depth two extension are Hopf-Galois extensions as well as its classical, weakened and pseudo- variants.

The defining Condition~(\ref{eq: D2}),
with $B = C$,
for right depth two extension is similar to
the characterization of projective module
as isomorphic to a direct summand of a free
module.  Like the derivation of projective bases for a projective module, we may derive from this condition right D2 quasi-bases for the right D2 extension $A \| B$ as follows.  For any ring extension,
using the hom-tensor relation, note that
$\Hom ({}_AA \otimes_B A_B, {}_AA_B) \cong
\End {}_BA_B$.  By  evaluation at $1_A$
note that $\Hom ({}_AA_B, {}_AA \otimes_B A_B) \cong (A \otimes_B A)^B$.

Then the split epi from $\pi: A^n \to A \otimes_B A$ satisfies an equation $\pi \circ \sigma = \id_{A \otimes_B A}$. We have $n$ standard split epis
$A^n \to A$, which compose with $\pi$ and $\sigma$ to give the equation $\sum_{i=1}^n f_i \circ g_i = \id_{A \otimes_B A}$,
where $f_i \in \Hom (A, A \otimes_B A)$
and $g_i \in \Hom (A \otimes_B A, A)$,
to which we apply the simplifications noted above.  Suppose $f_i \mapsto u_i \in (A \otimes_B A)^B$, while $g_i \mapsto \gamma_i \in \End {}_BA_B$ for each
$i = 1,\cdots,n$.
As a consequence, we obtain for
any $x, y \in A$ the identity
\begin{equation}
\label{eq: d2qb}
x \otimes_B y = \sum_{i=1}^n x \gamma_i(y)u_i
\end{equation}

Note that an extension $A \| B$ having elements $u_i \in (A \otimes_B A)^B$ and endomorphisms
$\gamma_i \in \End {}_BA_B$ satisfying this identity, eq.~(\ref{eq: d2qb}),
also implies that $A \| B$ is right D2, since $A^n \to A \otimes_B A$
given by $(a_1,\ldots,a_n) \mapsto \sum_i a_i u_i$ is an $(A,B)$-epimorphism
with section given by $x \otimes_B y \mapsto (x \gamma_1(y),\ldots,x\gamma_n(y))$.

For example, a normal subgroup $N$ of index $n$ in any group $G$
(over any ground ring)
is depth two with D2 quasi-bases given by $u_i = g_i^{-1} \otimes g_i$
and $\gamma_i(g) = F(g g_i^{-1})g_i$ for coset representatives
$\{ g_1 = e, g_2,\ldots,g_n \}$.

\subsection{When inclusion matrix is depth two}
Let the ground field $k = \C$ be the complex numbers when we consider
semisimple algebras, which consequently become multi-matrix algebras
(or split semisimple algebras).
Suppose $B \subseteq A$ is a subalgebra pair of semisimple algebras.
As one constructs an induction-restriction table for a subgroup
$H$ in a finite group $G$ \cite[p. 166]{AB}, we briefly review the  procedure
for generalizing to any pair of semisimple algebras (such as
finite dimensional complex group algebras).  Label the simples
of $A$ by $V_1, \ldots, V_s$ and the simple modules of $B$ by
$W_1, \ldots, W_r$.  To obtain the $i$'th column restrict the $i$'th
simple $A$-module $V_i$ to a $B$-module and express in terms of direct
sum of simples
\begin{equation}
V_j\!\downarrow_B \cong \oplus_{i = 1}^r m_{ij}W_i
\end{equation}
We let $M $ be the $r \times s$-matrix, or table, with entries $m_{ij}$:  $M = (m_{ij})$. By a well-known generalization of Frobenius reciprocity, the rows give induction of the $B$-simples:
\begin{equation}
W_i \! \uparrow^A = W_i^A = \oplus_{j = 1}^s m_{ij}V_j
\end{equation}
since $W_j^A = W_j \otimes_B A$ and $V_i \downarrow_B \cong \Hom (A_B,
V_i)$; i.e., if $[W_j^A, V_i]$ denotes the number of constituents
in $W_j^A$ isomorphic to $V_i$, Frobenius reciprocity is given
by
\begin{equation}
[W_i^A, V_j] = m_{ij} = [W_i, V_j\! \downarrow_B]
\end{equation}

The matrix $M$ is also known as the inclusion matrix of $B$
in $A$ \cite{GHJ}.

For example, the induction-restriction table (based
on Frobenius reciprocity $(\psi_i^G,\ \chi_j)_G \ = \ (\psi_i, \  \chi_j\downarrow_H)_H$) for the standard embedding of permutation
groups $S_2 \leq S_3$ is given by
$$
\begin{tabular}{l|lll}
$S_2 \leq S_3$ & $\chi_1 $ & $\chi_2$ & $\chi_3$ \\ \hline
$\psi_1 $  & 1 & 0 & 1 \\
$\psi_2$ & 0 & 1 & 1
\end{tabular}\ \ \ \
M = \left(
\begin{array}{ccc}
1 & 0 & 1 \\
0 & 1 & 1
\end{array}
\right) \ \ \ \
\begin{array}{rcccccccl}
 \stackrel{1}{\bullet} & & &&\stackrel{2}{\bullet} & & & &\stackrel{1}{\bullet} \\
&&&&&&&& \\
& \setminus & & / & & \setminus & &  /  & \\
&&&&&&&& \\
 & & \stackrel{\circ}{\mbox{\scriptsize 1}} & & & & \stackrel{\circ}{\mbox{\scriptsize 1}}  & &
\end{array}
$$
where $\psi_1 = 1_H$, $\chi_1= 1_G$ denote the trivial characters, $\psi_2$, $\chi_2$
the sign characters, and $\chi_3$ the two-dimensional
irreducible character of $S_3$.
Note too the inclusion diagram or Bratteli diagram, a bicolored weighted multigraph \cite{GHJ}.

For example, ${1_H}^G = \chi_1 + \chi_3$ and ${1_H}^G\downarrow_H =
2\cdot 1_H + \psi_2$.
Burciu \cite{B} notes that a subgroup $H$ is normal in $G$
if and only if ${1_H}^G\downarrow_H = [G:H] 1_H$.
In \cite{KK} it is established that the notion of depth two subalgebra for
subalgebra pair of complex group algebras is equivalent to the notion of normal subgroup.

\begin{proposition}
\label{prop-d2inclusion}
The inclusion matrix $M$ of a subalgebra pair of semisimple
complex algebras $B \subseteq A$ satisfies
\begin{equation}
MM^tM \leq n M
\end{equation}
for some positive integer $n$ if and only if $B$ is depth two subalgebra of $A$.
\end{proposition}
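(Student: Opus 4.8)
The plan is to turn the depth-two condition into a comparison of multiplicities of simple bimodules. Since $k=\C$ and $A,B$ are split semisimple, $A\otimes_{\C}B^{\mathrm{op}}$ is again split semisimple, so the category of $(A,B)$-bimodules is semisimple with simple objects $X_{ji}:=V_j\otimes_{\C}W_i^*$, where $V_1,\dots,V_s$ and $W_1,\dots,W_r$ are the simple left $A$- and $B$-modules and $W_i^*$ is the right $B$-module dual of $W_i$. By definition $B$ is (right) depth two in $A$ iff the partially trivial tower $A\supseteq B\supseteq B$ is right d-3, i.e.\ iff $A\otimes_B A$ is a direct summand of $A^n$ as an $(A,B)$-bimodule for some $n\in\N$; in the semisimple setting this amounts to the system of inequalities $[A\otimes_B A:X_{ji}]\le n\,[A:X_{ji}]$ for all $i,j$. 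So the whole argument reduces to writing these two families of multiplicities in terms of $M$.

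For $A$ itself I would write $A\cong\bigoplus_{k}V_k\otimes_{\C}V_k^*$ as an $(A,A)$-bimodule and restrict the right action to $B$ via $V_k^*\!\downarrow_B\cong(V_k\!\downarrow_B)^*\cong\bigoplus_i m_{ik}W_i^*$, obtaining $A\cong\bigoplus_{i,k}m_{ik}\,X_{ki}$ as $(A,B)$-bimodules, so $[A^n:X_{ji}]=n\,m_{ij}$. For the tensor square I would present one copy of $A$ as $\bigoplus_{k,i}m_{ik}\,V_k\otimes_{\C}W_i^*$ (an $(A,B)$-bimodule) and the other as $\bigoplus_{l,i'}m_{i'l}\,W_{i'}\otimes_{\C}V_l^*$ (a $(B,A)$-bimodule); since $B$ is split semisimple one has $W_i^*\otimes_B W_{i'}\cong\delta_{ii'}\,\C$, so the middle tensor collapses and $A\otimes_B A\cong\bigoplus_{k,l}\big(\sum_i m_{ik}m_{il}\big)V_k\otimes_{\C}V_l^*=\bigoplus_{k,l}(M^tM)_{kl}\,V_k\otimes_{\C}V_l^*$ as an $(A,A)$-bimodule. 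Restricting the right action to $B$ once more yields $[A\otimes_B A:X_{ki}]=\sum_l(M^tM)_{kl}\,m_{il}=(M^tMM^t)_{ki}$.

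Combining the two computations, $B$ is right depth two in $A$ iff there is $n$ with $(M^tMM^t)_{ki}\le n\,(M^t)_{ki}$ for all $k,i$, i.e.\ $M^tMM^t\le nM^t$; transposing, and noting that the entrywise order is visibly transpose-invariant, this is exactly $MM^tM\le nM$, which is the asserted condition. I would also remark that running the identical computation with the \emph{left} action restricted to $B$ produces the left-D2 inequality $MM^tM\le nM$ as well, so in the semisimple case right and left depth two coincide, consistent with \cite{LK2008, CK}.

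The individual steps are short once the framework is fixed; the point demanding the most care — and where I expect the only real friction — is the identification $A\otimes_B A\cong\bigoplus_{k,l}(M^tM)_{kl}\,V_k\otimes_{\C}V_l^*$, where one must track which tensor factor carries the left versus the right middle $B$-action and insert $W_i^*\otimes_B W_{i'}\cong\delta_{ii'}\,\C$ in the correct slot. A minor secondary matter is the quantifier on $n$: a uniform integer $n$ with $MM^tM\le nM$ exists iff $m_{ij}=0$ forces $(MM^tM)_{ij}=0$, and finiteness of the set of simples then supplies the single $n$ appearing in the definition~(\ref{eq: D2}) of d-3.
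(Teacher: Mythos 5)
Your proof is correct, and its computational core coincides with the paper's: the numbers you call $[A\otimes_B A : V_j\otimes_{\C}W_i^*]$ are exactly the paper's $[W_i^A\!\downarrow_B\uparrow^A, V_j]$ (tensor the bimodule decomposition with $-\otimes_B W_i$ to pass between the two pictures), and both computations identify them with the entries of $MM^tM$. Where you genuinely diverge is the direction ``inequality $\Rightarrow$ depth two'': you invoke semisimplicity of $A\otimes_{\C}B^{\mathrm{op}}$ so that the entrywise multiplicity inequalities are \emph{equivalent} to $A\otimes_B A$ being a direct summand of $A^n$ in the bimodule category, which settles both implications at once; the paper instead builds a monic natural transformation $\Ind^A_B\Res^A_B\Ind^A_B\hookrightarrow n\,\Ind^A_B$, evaluates it on the regular module $B_B$, and splits the resulting monic $A\otimes_B A\hookrightarrow A^n$ by Maschke, following \cite{KK}. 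Your route is shorter and self-contained in the split semisimple setting; the paper's route is the one whose shape survives outside it (it is the functorial argument used in \cite{KK} for separable algebras and Frobenius extensions). Two small points, neither a gap: the paper's proof states the summand condition for $B$-$A$-bimodules while you use $(A,B)$-bimodules, but for a semisimple (hence QF/Frobenius) pair left and right D2 agree and the two inequalities are transposes of one another, as you note; and your closing remark on the quantifier over $n$ correctly explains why a single $n$ suffices.
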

\begin{proof}
($\Leftarrow$) The depth two condition $A \otimes_B A \oplus P \cong A^n$ as
natural $B$-$A$-bimodules, becomes
\begin{equation}
[W_i^A\! \downarrow_B\uparrow^A, V_j] \leq n [W_i^A, V_j] = nm_{ij}
\end{equation}
for all $i= 1,\ldots,r$ and $j=1,\ldots,s$. But $W_i^A$ is given
by row $i$ of $M$, or $e_iM$, where $e_i$ denotes row matrix with all zeroes except $1$ in $i$'th column. Then $W_i^A \! \downarrow_B$
is given by $M (e_iM)^t = MM^te_i^t$. Finally $W_i^A\! \downarrow_B\uparrow^A$ is given by $(MM^te_i^t)^tM$, i.e. row $i$
of $MM^tM$.

($\Rightarrow$) If the inclusion matrix $M$ of semisimple subalgebra
pair $B \subseteq A$ satisfies $MM^tM \leq nM$ for some $n \in \Z_+$,
then $[\Ind^A_B \Res^A_B \Ind^A_B W_i, V_j] \leq n [\Ind^A_B W_i, V_j]$
for all $B$-simples $W_i$ and $A$-simples $V_j$ (fix these orderings).  
Via unique module decomposition into simples, we find a monic natural
transformation $\Ind^A_B \Res^A_B \Ind^A_B \stackrel{\cdot}{\hookrightarrow}
n\Ind^A_B$ from category $B$-\textrm{Mod} into $A$-\textrm{Mod}.
Now $B$, $A$ and so $ B^{\rm op}  \otimes A$ are separable $\C$-algebras,
so as in \cite[Theorem 2.1(6), pp. 3107-3108]{KK}, we apply the natural monic to
the right regular module $B_B$, apply the natural transformation property
to all left multiplications $\lambda_b$ ($b \in B$),
and note that 
$A \otimes_B A \hookrightarrow A^n$ splits by Maschke as $B$-$A$-bimodule monic.
Hence $A$ is depth two over its subalgebra $B$.  
\end{proof}

\subsection{Depth-3 tower of of Hopf algebras}\label{hophtw}
For $B \subset A$ an extension of finite dimensional Hopf algebras, define $\mtr{core}(B)$ to be the largest Hopf subalgebra of $B$ which is normal in $A$. It is easy to see that $\mtr{core}(B)$ always exists (see also \cite{B}). If $H\subset G$ is a group inclusion with $A=kG$ and $B=kH$ note that $\mtr{core}(B)=k\mathrm{core}_G(H)$.

\bn{theorem}\label{d3thopf}
Suppose that $A \supseteq B \supseteq C$ is a tower of semisimple Hopf algebras. If $C \subset \mtr{core}(B)$ then the tower is depth-3.
\end{theorem}

\bn{proof}
Since $\mtr{core}(B)$ is a normal Hopf subalgebra of $A$ it follows that the extension $\mtr{core}(B) \subset A$ is $D2$ and therefore $A\ot_{\mtr{core}(B)}A$ is a direct summand of the bimodule $_A(A^n)_{\mtr{core}(B)}$. Thus $A\ot_{\mtr{core}(B)}A$ is also a direct summand of the $A- C$  bimodule $_A(A^n)_C$ since $C \subset \mtr{core}(B)$.

Since $\mtr{core}(B)\subset B$ the canonical map $$A\ot_{\mtr{core}(B)}A \rightarrow A\ot_BA $$ is a surjective morphism of $A- A$-bimodules, in particular of $A-C$ bimodules. Since the category of $A \ot C^{op}$-modules is semisimple it follows that $A\ot_BA$ is a direct summand in $_A(A^n)_{C}$.
\end{proof}

\subsubsection{Kernel of a module}

Let $A$ be a semisimple Hopf algebra over an algebraically closed field $k$. Then $A$ is also cosemisimple and $S^2=\mtr{Id}$ (see \cite{Lard}). Let $\Lambda_{ _A}$ be the idempotent integral of $A$.
Denote by $\mtr{Irr}(A)$ the set of irreducible $A$-characters and let $C(A)$ be the character ring of $A$ with basis $\mtr{Irr}(A)$. There is an involution $``\;^*\;"$ on $C(A)$ determined by the antipode.

\bn{remark}\label{hopfgen} If $X \subset C(A^*)$ is closed under multiplication  then it generates a subbialgebra of $A$ denoted by $A_{ _{X}}$ \cite{NR'}. Moreover if $X$ is also closed under $``\;^*\;"$ it follows from the same paper that $A_{ _{X}}$ is a Hopf subalgebra. Since $A$ is finite dimensional any subbialgebra is a Hopf subalgebra and therefore any subset $X$ closed under multiplication is also closed under $``\;^*\;"$.
\end{remark}

Let $M$ be an $A$-module with character $\ch$.
Define $\mtr{ker}_M$ to be the set of simple subcoalgebras $C$ of $A$ such that $cm=\eps(c)m$ for all $c\in C$. It can be proven that the set $\mtr{ker}_M$ is closed under multiplication and $``\;^*\;"$ and therefore from \cite{NR'} it generates a Hopf subalgebra $A_{ _M}$ (or $A_{_ {\ch}}$) of $A$ \cite{B}. One has $A_{ _M}=\oplus_{C \in \mtr{ker}_M }C$.

\bn{remark}\label{rm}
1) $A_{ _{\ch}}$ is the largest subbialgebra $B$ of $A$ such that $\ch\dw^A_B=\ch(1)\eps_{ _B}$. Equivalently, $A_{ _{\ch}}$ is the largest subbialgebra $B$ of $A$ such that $AB^+A \subset \mtr{Ann}_{A}(M)$.

2) If $A=kG$ is a group algebra then $A_{_ {\ch}}=k[\mtr{ker}\;\ch]$ where $\mtr{ker}\;\ch$ is the kernel of the character $\ch$.

3) It is not known if $A_{ _{\ch}}$ is a normal Hopf subalgebra of $A$. In \cite{B} it was proven that $A_{\ch}$ is normal in $A$ if $\ch \in Z(A^*)$.

4) If $N$ is a submodule or a quotient of $M$ then clearly $A_{ _M} \subset A_{ _N}$ (since $A$ is semisimple).
\end{remark}

{\bf Notation:} If $B$ is a Hopf subalgebra of $A$ then we denote by  $\eps\uw^A_B$ the character $\eps_{ _B}\uw_B^A$.

\bn{proposition}
Suppose $B$ and $C$ are Hopf subalgebras of a finite dimensional semisimple Hopf algebra $A$. If
\bn{equation}\label{def}
_A(A\ot_B A)_C \oplus *\cong _AA^n\;_C
\end{equation}
as $A$-$C$-bimodules, then
$$C \subset  A_{ _{\eps_{ }\uw^A_{ _B}}}.$$
\end{proposition}

\bn{proof}
As in subsection \ref{d3ss} it follows that 
$$<M\uw^A_C\dw^A_B\uw_B^A,\;P>\ \leq \ n<M\uw^A_C,\;P>$$ for any simple left $C$-module $M$ and simple $A$-module $P$.

In terms of the characters this can be written as
\beq
{m_{ _A}}(\al\uw^A_C\dw^A_B\uw_B^A,\;\ch) \ \leq \ n\; {m_{ _A}}(\al\uw^A_C,\;\ch)
\eeq
for any irreducible character $\al$ of $C$ and any irreducible character $\ch$ of $A$. Here ${m_{ _A}}$ is the usual multiplication form on the character ring $C(A)$.
Put $\ch={\eps_{ _A}}$, the trivial $A$-character, in the above inequality. Since ${m_{ _A}}(\al\uw^A_C,\;{\eps_{ _A}})={m_{ _C}}(\al,\;{\eps_{ _C}})$ it follows that ${m_{ _A}}(\al\uw^A_C\dw^A_B\uw^A_B,\;{\eps_{ _A}})=0$ if $\al \neq {\eps_{ _C}}$. By Frobenius reciprocity this implies that ${m_{ _B}}(\al\uw^A_C\dw_B^A,\;{\eps_{ _B}})=0$ if $\al \neq {\eps_{ _C}}$. Adding over all irreducible characters $\al \in \mtr{Irr}(C)$ it follows that
\beq
{m_{ _B}}((\sum_{\al \in \mtr{Irr}(C)}\al(1)\al)\uw^A_C\dw_B^A,\;{\eps_{ _B}})={m_{ _B}}(\eps\uw^A_C\dw_B^A,\;{\eps_{ _B}})
\eeq

Since $\sum_{\al \in \mtr{Irr}(C)}\al(1)\al$ is the regular character of $C$ (see \cite{M}) it follows that $(\sum_{\al \in \mtr{Irr}(C)}\al(1)\al)\uw^A_C\dw_B^A$ is the regular character of $B$ multiplied by $\frac{|A|}{|C|}$. Thus ${m_{ _B}}(\eps\uw^A_C\dw_B^A,\;{\eps_{ _B}})=\frac{|A|}{|B|}$. Frobenius reciprocity implies that ${m_{ _C}}(\eps\uw^A_B\dw_C^A,\;{\eps_{ _C}})=\frac{|A|}{|B|}$. A dimension argument now shows that $\eps\uw^A_B\dw^A_C=\frac{|A|}{|B|}{\eps_{ _C}}$ and first item of Remark  \ref{rm} implies that $C \subset A_{ _{\eps_{ }\uw^A_{ _B}}}.$
\end{proof}

The above Proposition and Theorem \ref{d3thopf} suggest the following conjecture:

\bn{conj}  For any  Hopf subalgebra $B$ of a semisimple Hopf algebra $A$ one has:
\beq
\mtr{core}(B)= A_{ _{\eps_{ }\uw^A_{ _B}}}.
\eeq
\end{conj}

The next Proposition gives a description of $\mtr{core}(B)$ in terms of kernels and shows the inclusion $\mtr{core}(B) \subseteq A_{ _{\eps_{ }\uw^A_{ _B}}}$. In order to prove it we need the following lemmas.

\bn{lemma}\label{inc}
Let $K$ and $L$ be two Hopf subalgebras of a semisimple Hopf algebra $A$. If $\Lambda_{ _K}\Lambda_{ _L}=\Lambda_{ _L}$ then $K \subset L$.
\end{lemma}

\bn{proof}
By Corollary 2.5 of \cite{coset} there is a coset decomposition for $A$
\beq
A=\oplus_{C/\sim}C L.
\eeq
where $\sim$ is an equivalence relation on the set of simple subcoalgebras of $A$ given by $C \sim C'$ if and only if $CL=C'L$. In \cite{coset} this equivalence relation is denoted by $r^{A}_{ _{k,\;L}}$. The equality $\Lambda_{ _K}\Lambda_{ _L}=\Lambda_{ _L}$  shows that any subcoalgebra of $K$ is equivalent to $k1$ and therefore it is contained in $L$.
\end{proof}

\bn{lemma}
Suppose that $B$ is a Hopf subalgebra of a semisimple finite dimensional Hopf algebra.
Then $A_{ _{\eps_{ }\uw^A_{ _B}}}\subset B$. Equality holds if and only if $B$ is normal in $A$.
\end{lemma}

\bn{proof}
Let $K=A_{ _{\eps_{ }\uw^A_{ _B}}}$.
By the definition of $K$ it follows that $AK^+$ annihilates $A\ot_B k$. On the other hand $A\ot_Bk\cong A/AB^+$ as $A$-modules and therefore $AK^+ \subset AB^+$.  Thus $1- \Lambda_K \in AB^+$ which implies $\Lambda_{ _K}\Lambda_{ _B}=\Lambda_{ _B}$. The above Lemma implies that $K\subset B$. The second statement of the lemma is Corollary 2.5 from \cite{B}.
\end{proof}

\bn{proposition}\label{coredesc}
Suppose that $B$ is a Hopf subalgebra of a semisimple finite dimensional Hopf algebra $A$. Define inductively $$B_0=B,\;\; B_{s+1}=A_{ _{\eps\uw^A_{ _{B_s}}}}.$$ Then $$B_1 \supseteq B_2 \supseteq \cdots \supseteq B_s \supseteq B_{s+1}\supseteq \cdots.$$ If $B_s=B_{s+1}$ then $B_s =\mtr{core} (B)$.
\end{proposition}

\bn{proof}
The above proposition implies that $B_r \supseteq B_{r+1}$ for any $r$. Since $B$ is finite dimensional there is $s$ such that $B_s=B_{s+1}=B_{s+2}=\cdots$. Thus $B_s=A_{ _{\eps\uw^A_{ _{B_s}}}}$ and the above lemma implies that $B$ is normal in $A$. We have to show that $\mtr{core}(B)=B_s$. Suppose that $K$ is normal in $A$ and that $K \subseteq B$. It is enough to show $K \subseteq B_s$. Clearly $K \subseteq B_0$. If $K \subseteq B_i$ then there is a canonical surjection of $A$-modules $A/AK^+ \rightarrow A/A B_i^+$. Thus $A_{ _{\eps\uw^A_K}} \subset A_{ _{\eps\uw^A_{B_i}}}$ by the last item of Remark \ref{rm}. On the other hand $A_{ _{\eps\uw^A_K}}=K$ since $K$ is normal. Therefore $K \subseteq B_{i+1}$.
\end{proof}

\subsubsection{The Correspondent of conjugate Hopf subalgebras}
Let $A$ be a semisimple Hopf algebra over an algebraically closed field $k$
and let $\widehat{A^*}$ be the set of simple subcoalgebras of $A$. Since $A$ is cosemisimple note that $\widehat{A^*}$ can be identified with $\mtr{Irr}(A^*)$ \cite{L}.
Let $B$ be a Hopf subalgebra of $A$ and $C$ a simple subcoalgebra of $A$. Define
$$X_{_ {^CB}}=\{D \in \widehat{A^*}\;|\;dc{\Lam_{ _B}}=\eps(d)c{\Lam_{ _B}}\;\;\text{for all} \;\;c\in C, \;d \in D\;\}$$

\bn{proposition}
The set $X_{_ {^CB}}$ is closed under multiplication and $``\;^* \;"$ and it generates a Hopf subalgebra $^CB$ of $A$.
\end{proposition}

\bn{proof}
By Remark \ref{hopfgen} it is enough to show that the above set is closed under multiplication.
Suppose that $D$ and $D'$ are subcoalgebras in $X_{_ {^CB}}$. If $E$ is a simple subcoalgebra of $DD'$ then any $e \in E$ can be written as $\sum_{i=1}^sd_id'_i$ with $d_i \in D$ and $d'_i \in D'$. Then $ec{\Lam_{ _B}}=\eps(e)c{\Lam_{ _B}}$ which show that $E \in X_{_ {^CB}}$.
\end{proof}

{\bf Notation: }$^CB$ will also be denoted with $^cB$ if $c$ is the irreducible character of $A^*$  corresponding to $C$.

\bn{example}
\begin{rm} Let $A=kG$ and $B=kN$ where $N$ is a subgroup of $G$. The simple subcoalgebras of $A$ are $kg$ with $g \in G$ and $\Lam_{ _B}=\frac{1}{|N|}\sum_{n \in N}n$. Then $^gB=gBg^{-1}$ for all $g \in G$. Indeed $X_{ _{^gB}}=\{h \in G\;\;|\;hg{\Lam_{ _B}}=g{\Lam_{ _B}}\}=\{h \in G\;\;|\;hgN=gN\}=gNg^{-1}$
\end{rm}
\end{example}

\bn{proposition}
Let $B$ be a Hopf subalgebra of $A$ and $g \in G(A)$ be a grouplike element of $A$. Then $^gB=gBg^{-1}$.
\end{proposition}

\bn{proof}
First note that $^1B=B$. Clearly $B \subset \;^1B$. On the other hand the definition of $^1B$ implies that $\Lam_{ _{\;\;^1B}}\Lam_{ _B}=\Lam_{ _B}$. Then Lemma \ref{inc} implies $^1B\subset B$.

Let now $C$ be a simple subcoalgebra of $^gB$. Then $cg\Lam_{ _B}=\eps(c)g\Lam_{ _B}$ for all $c\in C$. Thus $g^{-1}cg{\Lam_{ _B}}=\eps(c){\Lam_{ _B}}$ which shows that $g^{-1}Cg\subset \;^1B=B$. Therefore $C\subset gBg^{-1}$ which shows that $^gB \subset gBg^{-1}$. A direct computations shows that $gBg^{-1}\subset ^gB $. Thus $^gB=gBg^{-1}$.

\end{proof}

\bn{proposition}
Let $B$ be a Hopf subalgebra of $A$. Then
$$A_{ _{\eps\uw^A_B}}=\cap_{C \in \widehat{A^*}}\;\;^CB.$$
\end{proposition}

\bn{proof} Recall the coset decomposition
\beq\label{cos}
A=\oplus_{C/\sim}CB.
\eeq
form Corollary 2.5 of \cite{coset}.
If $k$ is the trivial $B$-module then $$k\uw^A_B=\oplus_{C/\sim}CB\ot_Bk.$$ From the definition of $^CB$ it follows that $CB\ot_B k$ is trivial as left $^CB$-module. Therefore $\cap_{C \in \widehat{A^*}}\;\;^CB\subset A_{ _{\eps\uw^A_B}}$.

Note that $k\uw^A_B=A\ot_Bk\cong A\Lam_{ _B}$ as left $A$-modules via $a\Lam_{ _B} \mapsto a\ot_B \Lam_{ _B}$. The decomposition \ref{cos} implies that $CB\ot_Bk\cong C\Lam_{ _B}$ under the above isomorphism. Any simple subcoalgebra of $A_{ _{\eps\uw^A_B}}$ acts trivially on $k\uw^A_B$ and therefore on each $CB\ot_Bk$. This implies that any such coalgebra is contained in $^CB$. Thus $A_{ _{\eps\uw^A_B}} \subset {}^CB$ for any simple coalgebra $C \in \widehat{A^*}$.
\end{proof}

\bn{cor}
Let $B$ be a Hopf subalgebra of $A$. Then
$B$ is a normal Hopf subalgebra if and only if $$B=\cap_{C \in \widehat{A^*}}\, {}^CB.$$
\end{cor}

\bn{proof}
Since $A_{ _{\eps\uw^A_B}}=\cap_{C \in \widehat{A^*}}\, {}^CB$, this is Corollary 2.5 of \cite{B}.
\end{proof}

\bn{remark}
1)Theorem \ref{coredesc} implies that $\mtr{core}(B) \subset A_{ _{\eps\uw^A_B}}=\cap_{C \in \widehat{A^*}}\, {}^CB$. This can also be seen directly as follows. Fix $C \in  \widehat{A^*}$. For any $x \in \mtr{core}(B)$ and $c \in C$ one has that $xc\Lam_{ _B}=c_1(S(c_2)xc_3)\Lam_{ _B}=c_1\eps(x)\eps(c_2)\Lam_{ _B}=\eps(x)c\Lam_{ _B}$ since $\mtr{core}(B) $ is normal in $A$. Thus $\mtr{core}(B) \subset {}^CB$.

2) If $A_{ _{\ch}}$ is normal Hopf algebra for any $\ch \in \mtr{Irr}(A)$ then Proposition \ref{coredesc} implies the above conjecture on the core of a Hopf subalgebra.
\end{remark}

\section{Depth three Frobenius extension}

A Frobenius extension $A \| B$
is defined to be \textit{depth three} if
 the following tower of subalgebras in the endomorphism ring $E = \End A_B$ is right
or left
depth-3:
 via the algebra monomorphism,
left multiplication $\lambda: A \into E$
given by $\lambda(a)(x) = ax$ ($x,a \in A$) we obtain the (ascending) tower,
 $\lambda(B) \subseteq \lambda(A) \subseteq E$. By \cite[Theorem 3.1]{LK2008} the given
tower is left d-3 if and only if the tower is right d-3.

The definitions and first properties of
depth two and three extensions are introduced in detail in \cite{LK2008}.
There it is determined that a tower of
three group algebras corresponding to
the subgroup chain $G \geq H \geq K$
is depth-3 if the normal closure
$K^G$ (of $K$ in $G$) is contained
in $H$ (and shown above in Theorem~\ref{d3grptw} to be a characterization of depth-3
tower of finite groups).  In \cite{KK} it is shown
that, with $k = \C$ and $G$ a finite group,  the group algebra
$A$ of $G$ is depth two over subgroup algebra $B$ of $H$
if and only if $H$ is a normal subgroup of
$G$. This normality result for depth two
subalgebras is extended to semisimple Hopf algebras over an algebraically closed field of characteristic zero in \cite{BK}.

The following is a characterization of depth three for a separable, Frobenius extension in terms of the more familiar depth two property.
The following is true more generally for QF-extensions  \cite[Theorem 3.8]{CK}.

\begin{theorem}
\label{th-d3}
Suppose $A \| B$ is a separable extension and Frobenius extension.
Let $E$ denote $\End A_B$ and $\lambda: A \into E$ be understood
as the extension $E \| A$.
The $A \| B$ is depth three if and only if the composite extension
$E \| B$  is depth two.
\end{theorem}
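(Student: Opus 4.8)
The plan is to unwind both sides of the claimed equivalence into statements about tensor-square bimodules and to exploit the separability hypothesis to pass freely between tensor products over $B$ and over $A$. Recall that $A\,\|\,B$ is depth three means the tower $\lambda(B)\subseteq\lambda(A)\subseteq E$ is right (equivalently left) d-3, i.e.\ there is $n\in\N$ and a bimodule $P$ with
\[
E\otimes_A E\ \oplus\ P\ \cong\ E^n
\]
as natural $(E,B)$-bimodules. On the other side, $E\,\|\,B$ is depth two means the partially trivial tower $E\supseteq B\supseteq B$ is right d-3, i.e.\ $E\otimes_B E\oplus P'\cong E^m$ as $(E,B)$-bimodules for some $m$ and $P'$. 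So the proof must show these two conditions are equivalent when $A\,\|\,B$ is separable Frobenius.

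The key bridge is the Frobenius isomorphism $E=\End A_B\cong A\otimes_B A$ as $(A,A)$-bimodules from Section~\ref{subsec-fe}, which also makes $E$ a ring whose structure I can compute via the dual bases $\{x_i\},\{y_i\}$. First I would rewrite $E\otimes_A E\cong (A\otimes_B A)\otimes_A(A\otimes_B A)\cong A\otimes_B A\otimes_B A$ as $(A,A)$-bimodules, and similarly $E\otimes_B E\cong (A\otimes_B A)\otimes_B(A\otimes_B A)\cong A\otimes_B A\otimes_B A\otimes_B A$. Thus depth three of $A\,\|\,B$ translates to: $A^{\otimes_B 3}\oplus P\cong (A\otimes_B A)^n$ as $(A,B)$-bimodules, while depth two of $E\,\|\,B$ translates to: $A^{\otimes_B 4}\oplus P'\cong(A\otimes_B A)^m$ as $(A,B)$-bimodules. (I must keep careful track of which module structures survive: the outer $A$ on the left and the outer $B$ on the right are what matter for the d-3/D2 conditions after identifying $E$-modules with suitable $A$-bimodules, since an $E$-module restricted along $\lambda$ is an $A$-bimodule.)

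Now separability of $A\,\|\,B$ enters through the separability element $e=\sum e_i\otimes_B f_i$ with $\sum e_if_i=1_A$ and $ae=ea$: this gives a split $(A,A)$-bimodule epi $A\otimes_B A\to A$, hence $A$ is an $(A,A)$-bimodule direct summand of $A\otimes_B A$, and dually $A\otimes_B A$ is generated as needed. Consequently $A^{\otimes_B 3}$ is a direct summand of $A^{\otimes_B 4}$ and conversely $A^{\otimes_B 4}$ is a summand of a finite direct sum of copies of $A^{\otimes_B 3}$ (tensor the split epi $A\otimes_B A\to A$ in the appropriate slot). Combining: if $E\,\|\,B$ is D2, so $A^{\otimes_B 4}$ is a summand of $(A\otimes_B A)^m$, then $A^{\otimes_B 3}$ is a summand of $A^{\otimes_B 4}$, hence of $(A\otimes_B A)^m$, giving depth three of $A\,\|\,B$. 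For the converse, if $A^{\otimes_B 3}$ is a summand of $(A\otimes_B A)^n$, then tensoring with $-\otimes_B A$ (and using that $A\otimes_B A\otimes_B A$ is in turn a summand of a sum of copies of $A\otimes_B A$, again by the separability splitting in the middle slot) yields $A^{\otimes_B 4}$ as a summand of $(A\otimes_B A)^{n'}$, i.e.\ $E\,\|\,B$ is D2. One should also invoke the Frobenius property to ensure left d-3 $\Leftrightarrow$ right d-3 (cited from \cite{LK2008}) so the one-sided computations suffice.

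The main obstacle I expect is bookkeeping of the bimodule structures through the chain of identifications: the isomorphism $E\cong A\otimes_B A$ intertwines the $E$-bimodule structure on $E\otimes_A E$ with a specific $(A,A)$-bimodule structure on $A^{\otimes_B 3}$, and one must verify that the "outer" module structures ($_E$ on the left, $_B$ on the right, or after restriction $_A$ on the left and $_A$ on the right) are exactly the ones that the d-3 and D2 conditions refer to — in particular that a split $(E,B)$-bimodule epi corresponds precisely to a split $(A,A)$-bimodule epi of the tensor powers. The separability splittings are easy once the translation is set up correctly; the subtlety is entirely in confirming the dictionary between $E$-module language and $A$-bimodule language is faithful, which is where I would spend the most care, citing the Frobenius tensor-square/endomorphism identifications of Section~\ref{subsec-fe} and the d-3 framework of \cite{LK2008}.
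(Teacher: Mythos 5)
Your translation of both conditions through the Frobenius isomorphism $E\cong A\otimes_B A$ is the same route the paper takes (it just phrases everything in $E$-language: $E\otimes_A E\cong A^{\otimes_B 3}$, $E\otimes_B E\cong E\otimes_A E\otimes_A E\cong A^{\otimes_B 4}$), and your direction ``$E\supseteq B$ depth two $\Rightarrow$ $A\supseteq B$ depth three'' is correct and identical to the paper's: the canonical $(E,E)$-epi $E\otimes_B E\to E\otimes_A E$ splits by separability of $A\supseteq B$, so composing with the split $(E,B)$-epi $E^m\to E\otimes_B E$ gives the D3 condition. The bimodule bookkeeping you worry about is genuinely there but harmless, since all the maps you use ($E$-linear canonical epis, the $(E,E)$-bilinear separability section, and right tensoring by $A$ over $B$) preserve the outer $E$- and $B$-structures.

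There is, however, a genuine error in the other direction. You assert twice that separability makes the \emph{higher} tensor power a direct summand of finitely many copies of the \emph{lower} one (``conversely $A^{\otimes_B 4}$ is a summand of a finite direct sum of copies of $A^{\otimes_B 3}$,'' and again ``$A\otimes_B A\otimes_B A$ is in turn a summand of a sum of copies of $A\otimes_B A$, again by the separability splitting''). Separability gives exactly the opposite inclusion: the split epi is $A^{\otimes_B(k+1)}\to A^{\otimes_B k}$ (insert the separability element $e$ in a middle slot to split it), so the lower power is a summand of the higher, never the reverse. If your claim were true, every separable Frobenius extension would be depth three, contradicting the paper's computation that $S_3<S_4$ is not. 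The step you actually need in the direction ``D3 $\Rightarrow$ D2'' --- that $A^{\otimes_B 3}\cong E\otimes_A E$ is a summand of $(A\otimes_B A)^n\cong E^n$ as $(E,B)$-bimodules --- is not a consequence of separability but is precisely the depth-three \emph{hypothesis}, applied a second time after you tensor with $-\otimes_B A$. That is what the paper does ($E\otimes_A E^n$ is a summand of $E^{n^2}$ ``by the right D3 property''), and it shows this direction uses no separability at all, as the paper explicitly notes. With that substitution your argument closes; as written, the justification for that step fails.
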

\begin{proof}[Sketch of Proof] ($\Rightarrow$) This direction does not
apply separability. By the Frobenius
extension property, we  noted above that $E \cong A \otimes_B A$ as $(A,A)$-bimodules. Then $E \otimes_A E \otimes_A E \cong
E \otimes_B E$ as natural $(E,E)$-bimodules. By definition of
right D3 extension, $E \otimes_A E$ is isomorphic to direct summand
of $E^n$ as natural $(E,B)$-bimodules for some $n \in \N$, whence
$E \otimes_A E \otimes_A E \cong E \otimes_B E$ is $(E,B)$-isomorphic
to a direct summand of $E \otimes_A E^n$, which in turn is isomorphic
to a direct summand of $E^{n^2}$ by the right D3 property.  Hence
$E \| B$ is right D2, since $E \otimes_B E \oplus * \cong E^{n^2}$
as natural $(E,B)$-bimodules.

($\Leftarrow$) There is a split $(E,B)$-epimorphism from $E^n \to E \otimes_B E $ for some $n \in N$.  In addition, there is a split $(E,E)$-epimorphism
from $E \otimes_B E \to E \otimes_A E$ by the separability property
of the extension $A \| B$.  Composing the two split epis we obtain
a split epi $E^n \to E \otimes_A E$ showing $A \| B$ is right D3.
\end{proof}

 The proposition below has a proof useful to the exposition, although the result is improved somewhat in subsection~\ref{subsect-hd}.\begin{proposition}
\label{prop-cube}
Let $M$ be the inclusion matrix of a subalgebra pair of semisimple
complex algebras $B \subseteq A$, and $\mathcal{S} = MM^t$.
  The symmetric matrix
 $\mathcal{S}$ satisfies
\begin{equation}
\mathcal{S}^3 \leq n \mathcal{S}
\end{equation}
for some positive
integer $n$
if and only if $B$ is a depth three subalgebra of $A$.
\end{proposition}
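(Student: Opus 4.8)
The plan is to route the statement through the endomorphism ring $E=\End A_B$: use Theorem~\ref{th-d3} to trade ``depth three'' for a depth two condition on $E\| B$, and then use Proposition~\ref{prop-d2inclusion} to turn that into a matrix inequality, once the inclusion matrix of $B$ inside $E$ has been identified with $\mathcal S$.

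First I would record the standing fact that a subalgebra pair of semisimple complex algebras $B\subseteq A$ is automatically a separable and Frobenius extension: it is separable because $A$ is a semisimple algebra, so $\mu\colon A\otimes_B A\to A$ splits as an $(A,A)$-bimodule map; and it is a Frobenius extension by the standard multi-matrix / basic-construction analysis of semisimple pairs (see \cite{GHJ}). In particular ``$B$ is a depth three subalgebra of $A$'' carries the meaning of Section~2, and Theorem~\ref{th-d3} applies: $B$ is a depth three subalgebra of $A$ if and only if the composite extension $E\| B$, with $B$ embedded via $\lambda$, is depth two. Note also that $E=\End A_B$ is again a semisimple complex algebra, so Proposition~\ref{prop-d2inclusion} is available for the pair $\lambda(B)\subseteq E$.

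Second, I would compute the inclusion matrix $N$ of $\lambda(B)\subseteq E$ and show $N=\mathcal S$. Since $A_B$ is a semisimple right $B$-module, $E=\End A_B$ is split semisimple with simple modules $U_1,\dots,U_r$ indexed by the simple $B$-modules $W_1,\dots,W_r$; concretely $U_\ell=\Hom_{B^{\rm op}}(\widetilde W_\ell,A)$, where $\widetilde W_\ell$ is the simple right $B$-module dual to $W_\ell$ and $E$ acts by post-composition. Restricting along $\lambda$, an element $b\in B$ acts on $U_\ell$ by left multiplication on $A$, so $U_\ell\!\downarrow_{\lambda(B)}=\Hom_{B^{\rm op}}(\widetilde W_\ell,{}_BA)$. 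Writing $A\cong\bigoplus_j V_j\otimes_{\mathbb C}V_j^*$ as an $(A,A)$-bimodule and restricting each tensor factor to $B$ gives the $(B,B)$-bimodule decomposition
\[
{}_BA_B\;\cong\;\bigoplus_{i,k}\Big(\sum_j m_{ij}m_{kj}\Big)\,W_i\otimes_{\mathbb C}\widetilde W_k\;=\;\bigoplus_{i,k}\mathcal S_{ik}\,W_i\otimes_{\mathbb C}\widetilde W_k
\]
(with $W_i$ acting on the left and $\widetilde W_k$ on the right). Applying $\Hom_{B^{\rm op}}(\widetilde W_\ell,-)$ kills the summands with $k\neq\ell$ and turns $W_i\otimes_{\mathbb C}\widetilde W_\ell$ into the left $B$-module $W_i$, so $U_\ell\!\downarrow_{\lambda(B)}\cong\bigoplus_i\mathcal S_{i\ell}W_i$, i.e.\ $N=\mathcal S$. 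Equivalently, $E$ is the basic construction $\langle A,e_B\rangle$, whose inclusion matrix over $A$ is $M^t$, hence over $B$ it is $MM^t=\mathcal S$; cf.\ \cite{GHJ}.

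Finally, because $\mathcal S=MM^t$ is symmetric we have $N^t=N$, so Proposition~\ref{prop-d2inclusion} applied to $\lambda(B)\subseteq E$ says that $E\| B$ is depth two if and only if $NN^tN\le nN$ for some positive integer $n$, that is, if and only if $\mathcal S^3\le n\mathcal S$. Combined with the equivalence of the second paragraph, this is exactly the assertion. The one step requiring genuine care is the identification $N=\mathcal S$, where one must track the left and right $B$-module structures both inside $E=\End A_B$ and inside ${}_BA_B$; the appeal to Theorem~\ref{th-d3} is immediate once the extension is known to be separable Frobenius, and the closing matrix manipulation is trivial precisely because $\mathcal S$ is symmetric.
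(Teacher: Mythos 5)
Your proof is correct and follows essentially the same route as the paper's: both reduce depth three of $B\subseteq A$ to depth two of $B\hookrightarrow E=\End A_B$ via Theorem~\ref{th-d3} (after noting the extension is split, separable and Frobenius), identify the inclusion matrix of $B$ in $E$ as $\mathcal S=MM^t$, and then invoke Proposition~\ref{prop-d2inclusion} together with the symmetry of $\mathcal S$. The only difference is that where the paper cites \cite[2.3.5]{GHJ} (transpose inclusion matrix for $A\hookrightarrow E$, plus transitivity) for the identification $N=\mathcal S$, you verify it directly from the bimodule decomposition ${}_BA_B\cong\bigoplus_{i,k}\mathcal S_{ik}\,W_i\otimes_{\mathbb C}\widetilde W_k$ — a correct, self-contained substitute for that citation.
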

\begin{proof}
Let $M_m(\C)= \End_{\C} A = \mathcal{E}$ where $m = \dim A$,
which contains both $A$ and $B$ via left regular representation.
It is shown in \cite[2.3.5]{GHJ} that the centralizers $\mathcal{E}^A \subseteq \mathcal{E}^B$ have transpose inclusion matrix; i.e.
inclusion matrix of $A \into \End A_B$ via $a \mapsto \lambda_a$
is $M^t$.  It is not hard to show from transitivity of induction
that matrix multiplication yields new inclusion matrix of two successive subalgebra pairs.  Hence, inclusion matrix of
$B \into E$ via $b \mapsto \lambda_b$ ($b \in B$) is
given by $MM^t$.

 The algebra $A$ is separable, whence separable extension over $B$.  The extension $A \supseteq B$ is a split Frobenius extension
by application of \cite[Goodman-De la Harpe-Jones, ch.\ 2]{GHJ}, very faithful
conditional expectations. Then $A_B$ is a progenerator since $B$ is semisimple and $B \into A$ is split $B$-module monic, so $E$ and $B$ are Morita equivalent semisimple algebras.   By the theorem above,
$B \subseteq A$ is depth three iff $B \into E$ is depth two,
and we may apply Proposition~\ref{prop-d2inclusion} to the composite
inclusion matrix $\mathcal{S} = MM^t$.
\end{proof}

In general
for any subgroup $H$ in finite group $G$ with inclusion matrix $M$,
if the irreducible characters of $H$ are given by $\{ \psi_1, \ldots,\psi_r \} = \mbox{\rm Irr}(H)$, note that the matrix
$\mathcal{S} = MM^t$ is given by
\begin{equation}
\label{eq: sformula}
\mathcal{S} = \left(
\begin{array}{ccc}
\bra \psi_1^G | \psi_1^G \ket & \ldots & \bra \psi_1^G | \psi_r^G \ket \\
\ldots & \ldots & \ldots \\
\bra \psi_r^G | \psi_1^G \ket & \ldots & \bra \psi_r^G | \psi_r^G \ket
\end{array}
\right).
\end{equation}

For example, we revisit the inclusion $S_2 < S_3$ analyzed above.
Note that
\begin{equation}
\label{eq: s-matrix}
\mathcal{S} = MM^t = \left(
\begin{array}{cc}
2 & 1 \\
1 & 2
\end{array}
\right).
\end{equation}
Since $\mathcal{S}$ is strictly positive (i.e. has only positive whole number entries), it is clear that there is positive integer $n$
such that $\mathcal{S}^3 \leq n\mathcal{S}$.

The notation in the proposition above with finite dimensional complex group algebras $B = \C [H]$ and $A = \C [G]$ is continued in the next corollary:

\begin{cor}
\label{cor-pos}
The subgroup $H$ is depth three in $G$ if the matrix $\mathcal{S}$ is strictly positive.
\end{cor}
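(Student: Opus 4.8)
The plan is to reduce Corollary~\ref{cor-pos} directly to Proposition~\ref{prop-cube}. The key observation is that for a pair of finite groups $H \leq G$, the matrix $\mathcal{S} = MM^t$ has a representation-theoretic meaning recorded in equation~\equref{sformula}: its $(i,j)$ entry is the inner product $\bra \psi_i^G \| \psi_j^G \ket$ of the induced characters. These are non-negative integers automatically, so the content of the hypothesis ``$\mathcal{S}$ is strictly positive'' is that \emph{every} such inner product is at least $1$, i.e.\ any two induced irreducible characters $\psi_i^G$ and $\psi_j^G$ share a common irreducible constituent of $G$.

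First I would note that $\mathcal{S}$ is a symmetric matrix of non-negative integers, and therefore so is each power $\mathcal{S}^k$; in particular $\mathcal{S}^3$ has non-negative integer entries. Next, under the strict-positivity hypothesis each entry $s_{ij}$ of $\mathcal{S}$ satisfies $s_{ij} \geq 1$. Hence for a suitably large positive integer $n$ — for instance $n$ equal to the maximal entry of $\mathcal{S}^3$ — one gets $(\mathcal{S}^3)_{ij} \leq n \leq n\, s_{ij}$ for all $i,j$, so that $\mathcal{S}^3 \leq n\mathcal{S}$ entrywise. This is exactly the inequality appearing in Proposition~\ref{prop-cube}, which then yields that $B = \C[H]$ is a depth three subalgebra of $A = \C[G]$; equivalently, $H$ is depth three in $G$.

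The argument is short because all the real work is already done in Proposition~\ref{prop-cube}, which translates the entrywise cube inequality into the depth three property via Theorem~\ref{th-d3} and Proposition~\ref{prop-d2inclusion}. The only point requiring a moment's care — and the closest thing to an obstacle — is making sure that ``strictly positive'' is being applied to the right matrix: it is $\mathcal{S} = MM^t$, the Gram matrix of the induced characters, not the inclusion matrix $M$ itself, that must have all entries positive. (Indeed the $S_2 < S_3$ example in the excerpt shows $M$ has a zero entry while $\mathcal{S}$ does not.) Once that is observed, the implication is immediate: positivity of $\mathcal{S}$ forces the existence of the bounding integer $n$ with no further structural input about $H$ or $G$.
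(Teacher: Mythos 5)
Your argument is correct and is exactly the paper's (the corollary is stated without a separate proof precisely because the preceding paragraph on the $S_2 < S_3$ example already observes that strict positivity of $\mathcal{S}$ makes $\mathcal{S}^3 \leq n\mathcal{S}$ obvious, and Proposition~\ref{prop-cube} then applies). Your explicit choice of $n$ as the maximal entry of $\mathcal{S}^3$ and the remark that positivity must be required of $\mathcal{S}=MM^t$ rather than of $M$ are both sound.
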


Another example: the standard inclusion of full permutation group algebras
$B= \C[S_3] \into \C[S_4] = A$ has inclusion matrix
(computed from character tables in e.g.\ \cite{FH}) and symmetric matrix:
$$
M = \left(
\begin{array}{ccccc}
1 & 0 & 0 & 1 & 0 \\
0 & 1 & 0 & 0 & 1  \\
0 & 0 & 1 & 1 & 1
\end{array}
\right) \ \
\mathcal{S} = \left(
\begin{array}{ccc}
2 & 0 & 1 \\
0 & 2 & 1 \\
1 & 1 & 3
\end{array}
\right)  \ \
S^3 = \left(
    \begin{array}{ccc}
15 & 7 & 21 \\
7 & 15 & 21 \\
21 & 21 & 43
\end{array}
\right)
$$

It is clear that there is no positive integer $n$ for which
$\mathcal{S}^3 \leq n \mathcal{S}$, since $\mathcal{S}$ has zero
entries but $\mathcal{S}^3$ is strictly positive.  We conclude
that
$S_3$ is not a depth three subgroup of $S_4$. (Using the next theorem one computes that $S_3$ is a depth five
subgroup of $S_4$.)
\subsection{Higher depth}\label{subsect-hd} Recall from \cite{LK2008} that depth $n > 2$
is defined as follows.  Begin with a Frobenius extension (or
QF extension \cite{CK}) $B = A_{-1} \subseteq A = A_0$.  Let
$A_1 = \End A_B$ and inductively $A_n = \End (A_{n-1})_{A_{n-2}}$.
By the Frobenius hypothesis and its endomorphism ring theorem, $A_n \cong A \otimes_B \cdots \otimes_B A$ ($n+1$ times $A$). Embedding
$A_n \into A_{n+1}$ via left regular representation $\lambda$, we obtain a Jones
tower of algebras,
$$ B \into A \into A_1\into \cdots \into A_n \into A_{n+1} \into \cdots $$  The subalgebra $B$ in $A$ is depth $n$ if $A_{n-2} \supseteq A_{n-3} \supseteq B$ is a depth-3 tower defined above;
infinite depth if there is no such positive integer $n$.   Of course, this agrees with the definition of depth three subalgebra above.
If $B$ and $A$ are semisimple complex algebras, $A \supseteq B$
becomes a split, separable Frobenius extension via the construction
of a very faithful conditional expection \cite{GHJ}.  This type
of extension has an endomorphism ring theorem \cite {NEFE}, and enjoys transitivity, so that all extensions in this Jones tower are split,
separable Frobenius extensions, and all algebras are semisimple by Morita's theorem
(or Serre's theorem on global dimension). Indeed, all the odd $A_n$'s
are Morita equivalent to $B$, while all the even $A_n$'s are Morita
equivalent to $A$.
The proof of the lemma below is similar to that of  Prop.~\ref{prop-d2inclusion} and therefore omitted.
(One notes that $\Ind_C^A \cong \Ind^A_B \Ind^B_C$ is given by the rows of  matrix $NM$.)
\begin{lemma}
Suppose $C \subseteq B \subseteq A$ is a tower of semisimple algebras
with inclusion matrices $N$ and $M$ respectively.  Then the tower is
depth-3 if and only if there is a positive integer $n$ such that
\begin{equation}
NMM^tM \leq n NM
\end{equation}
\end{lemma}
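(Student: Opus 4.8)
The plan is to run, essentially word for word, the argument of Proposition~\ref{prop-d2inclusion}, but with the genuine tower $C \subseteq B \subseteq A$ in place of the partially trivial one and with $C$ playing the role of the coefficient algebra. First I would record the module-theoretic reformulation of the depth-3 property already extracted in subsection~\ref{d3ss}: \emph{the tower $C \subseteq B \subseteq A$ of semisimple $k$-algebras is depth-3 if and only if there is a positive integer $n$ with}
\[
[\,\Ind_C^A U\downarrow_B\uparrow^A,\ V\,]\ \leq\ n\,[\,\Ind_C^A U,\ V\,]
\]
\emph{for every simple left $C$-module $U$ and every simple left $A$-module $V$.} The ``only if'' direction is obtained, as in subsection~\ref{d3ss}, by tensoring the defining relation $A\otimes_B A\oplus P\cong A^n$ of $(A,C)$-bimodules on the right with $-\otimes_C U$, using $(A\otimes_B A)\otimes_C U\cong \Ind_C^A U\downarrow_B\uparrow^A$ and $A^n\otimes_C U\cong(\Ind_C^A U)^n$, and comparing multiplicities of $V$ via Krull--Schmidt. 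For the ``if'' direction one proceeds exactly as in Proposition~\ref{prop-d2inclusion}: the system of inequalities gives, via unique decomposition into simples, a monic natural transformation $\eta\colon \Ind_B^A\Res_B^A\Ind_C^A \hookrightarrow n\,\Ind_C^A$ of functors from $C$-modules to $A$-modules; evaluating $\eta$ at the $(C,C)$-bimodule $C$ produces a left $A$-module monomorphism $A\otimes_B A\hookrightarrow A^n$, and applying naturality to the right multiplications $\rho_c\colon C\to C$ (which are morphisms of left $C$-modules) shows this map is moreover right $C$-linear, hence an $(A,C)$-bimodule monomorphism; since $A$ and $C$ are separable $\C$-algebras, $A\otimes_\C C^{\mathrm{op}}$ is separable, so Maschke's theorem splits the monomorphism and one recovers $A\otimes_B A\oplus P\cong A^n$ as $(A,C)$-bimodules, i.e. depth-3.

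Next I would translate the multiplicity inequality into the asserted matrix inequality by bookkeeping with the inclusion matrices, keeping the convention that rows are indexed by the simples of the smaller algebra. Writing the multiplicities of a module as a row vector in the appropriate basis of simples, restriction along $B\subseteq A$ is right multiplication by $M^t$, induction along $B\subseteq A$ is right multiplication by $M$, and induction along $C\subseteq B$ is right multiplication by $N$; by transitivity $\Ind_C^A\cong\Ind_B^A\Ind_C^B$, so for the $i$-th simple $C$-module $U_i$ the induced module $\Ind_C^A U_i$ is given by row $i$ of $NM$, whence $\Ind_C^A U_i\downarrow_B$ is row $i$ of $NMM^t$ and $\Ind_C^A U_i\downarrow_B\uparrow^A$ is row $i$ of $NMM^tM$. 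Therefore $[\,\Ind_C^A U_i\downarrow_B\uparrow^A,\ V_j\,]=(NMM^tM)_{ij}$ and $[\,\Ind_C^A U_i,\ V_j\,]=(NM)_{ij}$, so the reformulated depth-3 condition holds for some $n$ precisely when $NMM^tM\leq n\,NM$, which is the lemma.

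The only genuinely delicate point — and the one I would write out in full rather than merely cite — is the passage in the ``if'' direction from a numerical statement about left $A$-modules to an honest splitting of $(A,C)$-bimodules; this is handled by the separability/Maschke device of Proposition~\ref{prop-d2inclusion}, now with $C$ in the coefficient slot, together with the observation that the functors $\Ind_B^A\Res_B^A$ and $\Ind_C^A$ act as the identity on the right-hand tensor factor, so that naturality against the maps $\rho_c$ exactly encodes right $C$-linearity. Everything else is the elementary matrix bookkeeping above, parallel to Proposition~\ref{prop-d2inclusion}, and presents no difficulty; I would also note explicitly the orientation convention on inclusion matrices, since it is what makes the three successive operations $\Ind_C^A$, $\downarrow_B$, $\uparrow^A$ correspond to the word $NMM^tM$ rather than its transpose.
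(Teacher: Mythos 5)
Your argument is correct and is precisely the proof the paper intends: the authors omit it, stating only that it is ``similar to that of Proposition~\ref{prop-d2inclusion}'' with the hint that $\Ind_C^A\cong\Ind_B^A\Ind_C^B$ corresponds to the rows of $NM$, and you have carried out exactly that adaptation, including the correct orientation bookkeeping and the separability/Maschke splitting in the coefficient slot $C$.
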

Notice that Prop.~\ref{prop-d2inclusion} follows from letting $B = C$
and $N$ equal the identity matrix of rank $\dim Z(B)$.  Conversely,
if $A \supseteq B$ is depth two, and $C$ any subalgebra of $B$, then
the lemma follows in this special case from Prop.~\ref{prop-d2inclusion} by multiplying the
inequality there from the left by the inclusion matrix $N$ of $C \subseteq B$.

Let $n, m$ and $q$ denote positive integers below.
\begin{theorem}
\label{th-hd}
Suppose $B \subseteq A$ is  a subalgebra pair of semisimple algebras. Let $M$ be the inclusion matrix and $\mathcal{S} = MM^t$. If $n = 2m + 1$ then $A \supseteq B$ is depth $n$ if and only if $\mathcal{S}^{m+1} \leq q\mathcal{S}^m $ for some $q$. If $n = 2m$, then $A \supseteq B$ is depth $n$ if and only if
$\mathcal{S}^m M \leq q\mathcal{S}^{m-1}M$ for some $q$.
\end{theorem}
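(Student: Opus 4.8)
The plan is to compute the inclusion matrices of the consecutive pairs in the Jones tower $B = A_{-1}\into A_0 = A \into A_1 \into A_2 \into \cdots$ and then feed the relevant ones into the Lemma above. Recall from subsection~\ref{subsect-hd} that, since $B\into A$ is a split separable Frobenius extension of semisimple complex algebras, every extension $A_{i-1}\into A_i$ in this tower is again such an extension, all the $A_i$ are semisimple, and $A_i = \End (A_{i-1})_{A_{i-2}}$. Exactly as in the proof of Proposition~\ref{prop-cube}, the centralizer statement \cite[2.3.5]{GHJ} shows that the inclusion matrix of $A_{i-1}\into \End (A_{i-1})_{A_{i-2}} = A_i$ is the transpose of the inclusion matrix of $A_{i-2}\into A_{i-1}$. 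Starting from the inclusion matrix $M$ of $A_{-1}\into A_0$ and iterating, induction on $i$ gives that the inclusion matrix of $A_{i-1}\into A_i$ equals $M$ for $i$ even and $M^t$ for $i$ odd.

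Next I would use transitivity of induction --- so that inclusion matrices of successive pairs multiply in the natural order, as in the parenthetical remark preceding the Lemma --- to conclude that the inclusion matrix of $B = A_{-1}\into A_k$ is the ordered product $MM^tMM^t\cdots$ of $k+1$ alternating factors; this equals $\mathcal{S}^{(k+1)/2}$ when $k$ is odd and $\mathcal{S}^{k/2}M$ when $k$ is even. (For $k = -1$ this reads $\mathcal{S}^0$, the identity matrix of rank $\dim Z(B)$, consistent with the remark after the Lemma.) Now invoke the definition of higher depth: $B\subseteq A$ is depth $n$ precisely when the tower $A_{n-2}\supseteq A_{n-3}\supseteq A_{-1} = B$ is depth-3. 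Apply the Lemma above with $C = B$, middle algebra $A_{n-3}$ and top algebra $A_{n-2}$; write $N$ for the inclusion matrix of $B\into A_{n-3}$ and $P$ for that of $A_{n-3}\into A_{n-2}$. By the computation just made, $P = M$ if $n$ is even and $P = M^t$ if $n$ is odd, while $N = \mathcal{S}^{m-1}$ when $n = 2m$ and $N = \mathcal{S}^{m-1}M$ when $n = 2m+1$ (again reading $\mathcal{S}^0$ as the appropriate identity matrix). The Lemma then says the tower is depth-3 iff $NPP^tP \leq q\,NP$ for some positive integer $q$.

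It remains only to simplify both sides in each parity. When $n = 2m$ we get $NP = \mathcal{S}^{m-1}M$ and $NPP^tP = \mathcal{S}^{m-1}MM^tM = \mathcal{S}^m M$, so the condition becomes $\mathcal{S}^m M \leq q\,\mathcal{S}^{m-1}M$. When $n = 2m+1$ we get $NP = \mathcal{S}^{m-1}MM^t = \mathcal{S}^m$ and $NPP^tP = \mathcal{S}^{m-1}(MM^t)(MM^t) = \mathcal{S}^{m+1}$, so the condition becomes $\mathcal{S}^{m+1}\leq q\,\mathcal{S}^m$. These are the two assertions of the theorem. As a consistency check, $n = 2$ recovers Proposition~\ref{prop-d2inclusion}, and $n = 3$ yields $\mathcal{S}^2\leq q\mathcal{S}$; the latter is equivalent to the condition $\mathcal{S}^3\leq n\mathcal{S}$ of Proposition~\ref{prop-cube} because $\mathcal{S} = MM^t$ has positive diagonal, so the supports of the powers $\mathcal{S}^\ell$ are nondecreasing and $\mathcal{S}^3\leq n\mathcal{S}$ forces them to stabilize already at $\mathcal{S}$.

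The only real work is the inductive bookkeeping of the alternating transpose pattern together with checking that the Lemma --- stated for semisimple towers --- genuinely applies to $A_{n-2}\supseteq A_{n-3}\supseteq B$; both points are handled by the structure of the Jones tower recalled in subsection~\ref{subsect-hd}. I expect the main obstacle to be purely organizational: keeping the parity of $n$ straight and correctly reading off $N$ and $P$ in the boundary cases $m = 1$, where some factor $\mathcal{S}^{m-1}$ degenerates to an identity matrix.
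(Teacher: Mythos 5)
Your argument is correct and is exactly the paper's proof, merely written out in full: the paper likewise observes that the inclusion matrices along the Jones tower alternate between $M$ and $M^t$ (so that $B\into A_{n-3}$ and $A_{n-3}\into A_{n-2}$ have the matrices you compute) and then applies the preceding Lemma to the depth-3 tower $B\into A_{n-3}\into A_{n-2}$ in the even and odd cases. Your bookkeeping of the parities, the boundary cases, and the consistency check against Propositions~\ref{pr:d2inclusion}--type statements are all sound.
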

\begin{proof}
The proof follows from noting that if $M$ is the inclusion matrix
of $B \subseteq A$, then $M^t$ is the inclusion matrix of $A \into
A_1$, and $\mathcal{S}$ is the inclusion matrix of their composite
$B \into A_1$.  The proof now follows from applying the last lemma
to the depth-3 tower $B \into A_{n-3} \into A_{n-2}$ in the even
and odd case.
\end{proof}
It is worth emphasizing that a depth $n$ algebra extension is also depth $n+1$ (so one might denote this as depth $\geq n$); in the special case of the theorem, this is seen by
multiplying the given inequality from the right by the inclusion matrix $M$ or $M^t$. Of course one should strive to use the least depth to one's knowledge.
Let $m$ be a positive integer and $G$ a finite group
 in the next result on  subgroups of finite depth.
\begin{cor}
\label{cor-dn}
Suppose $H < G$ is a subgroup with symmetric matrix $\mathcal{S}$.  If $\mathcal{S}^m$
is a strictly positive matrix, then $H$ is a subgroup of depth
$2m + 1$ in $G$.
\end{cor}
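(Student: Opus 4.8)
The plan is to deduce the corollary directly from Theorem~\ref{th-hd}.  Set $A = \C[G]$ and $B = \C[H]$: this is a subalgebra pair of semisimple complex algebras with inclusion matrix $M$ and symmetric matrix $\mathcal{S} = MM^t$, so Theorem~\ref{th-hd}, applied with $n = 2m+1$, says that $H$ is of depth $2m+1$ in $G$ exactly when there is a positive integer $q$ with $\mathcal{S}^{m+1} \leq q\,\mathcal{S}^m$ entrywise.  So the whole task reduces to producing such a $q$ under the hypothesis that $\mathcal{S}^m$ is strictly positive.

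First I would note that $\mathcal{S}$, and hence every power of it, is a square matrix of non-negative integers, since the entries $m_{ij} = [\psi_i^G, \chi_j]$ of $M$ are non-negative integers and therefore so are those of $\mathcal{S} = MM^t$.  In particular $\mathcal{S}^m$ and $\mathcal{S}^{m+1}$ have non-negative integer entries.  If $\mathcal{S}^m$ is strictly positive then each of its entries is $\geq 1$, so I would simply take $q = 1 + \max_{i,j}(\mathcal{S}^{m+1})_{ij}$, a positive integer; then $(\mathcal{S}^{m+1})_{ij} \leq q - 1 < q \leq q\,(\mathcal{S}^m)_{ij}$ for all $i,j$, i.e.\ $\mathcal{S}^{m+1} \leq q\,\mathcal{S}^m$, and Theorem~\ref{th-hd} yields depth $2m+1$.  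Taking $m = 1$ here recovers Corollary~\ref{cor-pos}.

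I do not anticipate any genuine obstacle: granting Theorem~\ref{th-hd}, the statement is an elementary observation about non-negative integer matrices.  The one remark I would add as a sanity check is that strict positivity of $\mathcal{S}^m$ propagates upward: each diagonal entry $s_{ii} = \sum_j m_{ij}^2$ of $\mathcal{S}$ is $\geq 1$ because no row of $M$ vanishes (as $\psi_i^G \neq 0$), so $(\mathcal{S}^{m+1})_{ij} \geq s_{ii}\,(\mathcal{S}^m)_{ij} \geq 1$ and hence $\mathcal{S}^{m'}$ is strictly positive for every $m' \geq m$.  This is consistent with the remark following Theorem~\ref{th-hd} that a depth $n$ extension is also of depth $n+1$.
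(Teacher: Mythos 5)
Your proposal is correct and follows exactly the paper's route: apply Theorem~\ref{th-hd} with $n=2m+1$ and observe that strict positivity of $\mathcal{S}^m$ (entries $\geq 1$) immediately supplies a positive integer $q$ with $\mathcal{S}^{m+1}\leq q\,\mathcal{S}^m$. You merely spell out an explicit choice of $q$ that the paper leaves implicit.
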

\begin{proof} Applying the theorem we see
$\mathcal{S}^{m+1} \leq q\mathcal{S}^m$ for some positive integer $q$
since $\mathcal{S}^m$ is a strictly positive matrix.
\end{proof}

  For example, while $S_3 < S_4$ is not D3 subgroup, we note
that $\mathcal{S}^2$ is already strictly positive order $3$ matrix,
whence it is a depth five subgroup (and it may be checked that it is not depth four).

As another example of a more cautionary note,  the symmetries of a square
$D_4$ in $S_4$ has zero entries in all powers of its order 5 matrix $\mathcal{S} = MM^t$.  However, one computes that $\mathcal{S}^2 M \leq 4 \mathcal{S}M$, so that $D_4$ is a depth four subgroup of $S_4$ according
to Theorem~\ref{th-hd}.

In a forthcoming paper it will be shown that after a permutation of the indices, the matrix $S$ can be written as a sum of diagonal blocks. Moreover there is $p>0$ such that the $p$-power of each diagonal block is a positive matrix.  Applying Theorem~\ref{th-hd} this implies that the extension $B \subseteq A$ is of depth  $\geq 2p+1$; in other words, all semisimple subalgebra pairs are of finite depth.

\subsection{Simplified condition for depth three}
Again let $A \supseteq B$ be an algebra extension.
In case $A_B$ is a generator, such as when the extension is free or right split, there is a particularly simplified condition for
when a Frobenius extension is depth three.
\begin{theorem}
\label{th-tensorsquare}
Suppose $A \supseteq B$ is a Frobenius extension where the natural
module $A_B$ is
a generator.  Then $A \supseteq B$ is depth three if and only if
there is a $B$-$B$-bimodule $P$ and positive integer $n$ such that
\begin{equation}
\label{eq: endresult}
{}_BA \otimes_B A_B \oplus P \cong {{}_BA_B}^n
\end{equation}
\end{theorem}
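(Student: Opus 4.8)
The plan is to unwind the definition of depth three for the Frobenius extension $A\|B$ (via the tower $\lambda(B)\subseteq\lambda(A)\subseteq E$ with $E=\End A_B$) into a statement about $A$-$B$-bimodules, and then use the generator hypothesis together with $E\cong A\otimes_B A$ to descend to $B$-$B$-bimodules. First I would recall that, since $A_B$ is a generator, left multiplication $\lambda\colon A\into E$ makes $E$ the endomorphism ring of a progenerator, so $B$ and $E$ are Morita equivalent and in fact $A$ sits as an $A$-$B$-bimodule progenerator realizing one half of this equivalence: ${}_E E_B \cong {}_E(\Hom({}_BA,{}_BA))_B$ and ${}_BA$ is the corresponding bimodule in the other direction. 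Concretely, the Morita context is $({}_EA_B, {}_BA^*_E)$ with $A\otimes_B A^* \cong E$ and $A^*\otimes_E A \cong B$; I would set up exactly this and note all the tensor functors involved are exact because everything is (or becomes) semisimple, or more carefully because the relevant bimodules are finitely generated projective on the appropriate side.

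Next I would translate the depth-three condition. By definition $A\|B$ is depth three iff $E\otimes_A E$ is an $(E,B)$-bimodule direct summand of $E^n$ for some $n$ (right d-3 of the tower $E\supseteq\lambda(A)\supseteq\lambda(B)$), equivalently iff there is a split $(E,B)$-epi $E^n\twoheadrightarrow E\otimes_A E$. Using the Frobenius property $E\cong A\otimes_B A$ as $(A,A)$-bimodules (hence as $(E,A)$- and $(A,B)$-bimodules after restricting scalars along $\lambda$), one computes $E\otimes_A E \cong A\otimes_B A\otimes_B A$ and $E\cong A\otimes_B A$, so the depth-three condition reads: $A\otimes_B A\otimes_B A$ is an $(E,B)$-bimodule summand of $(A\otimes_B A)^n$. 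Now apply the Morita functor $A^*\otimes_E(-)$, which is an equivalence from $E$-modules to $B$-modules and sends ${}_EA_B \mapsto {}_BB_B$, hence ${}_E(A\otimes_B A)_B = {}_E(A\otimes_B A\otimes_B A^*\otimes_E A)_B \mapsto {}_B(A\otimes_B A)_B$, and similarly the triple tensor power $A\otimes_B A\otimes_B A \cong (A\otimes_B A)\otimes_B A \cong E\otimes_A(\text{something})$... — more cleanly, since $A^*\otimes_E E \cong A^*$ and $A^*\otimes_E(A\otimes_B A)\cong (A^*\otimes_E A)\otimes_B A = B\otimes_B A = A$ wait that has the wrong bimodule structure, so I would instead apply the equivalence on the \emph{left} ($E$-action) side only, i.e. use that ${}_E X \mapsto {}_B(A^*\otimes_E X)$ and track the residual right $B$-action through. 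The upshot should be: the depth-three condition is equivalent to ${}_B(A\otimes_B A\otimes_B A)_B$ being a summand of ${}_B(A^n\otimes_B A)_B\cong {}_B(A\otimes_B A)^n{}_B$... and then one more reduction strips one tensor factor of $A$ using that $A_B$ is a generator (so $A\otimes_B(-)$ reflects summand relations, or dually that $B$ is a summand of $A^m$ as a $B$-$B$-bimodule), yielding exactly ${}_BA\otimes_B A_B\oplus P\cong ({}_BA_B)^n$.

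The main obstacle — and the step I would spend the most care on — is bookkeeping the bimodule structures through the Morita equivalence: $E$ acts on $A\otimes_B A$ on the left via $\lambda$ applied to the \emph{outer} $A$, but $E\cong A\otimes_B A$ as an $(A,A)$-bimodule glues the two $A$'s together, so one must be scrupulous about which copy of $A$ carries which action, and verify that the equivalence $A^*\otimes_E(-)$ interacts correctly with the right $B$-module structure (which it does, since that structure is ``inert'' under the left equivalence). I would handle this by writing everything in terms of the explicit Frobenius system $(F,\{x_i\},\{y_i\})$ and the dual-bases maps from subsection~\ref{subsec-fe}, checking the key isomorphisms $E\otimes_A E\cong A\otimes_B A\otimes_B A$ and $A^*\otimes_E(A\otimes_B A\otimes_B A)\cong A\otimes_B A$ as $(B,B)$-bimodules on elements. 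The converse direction is easier: given eq.~\eqref{eq: endresult}, tensor on the left over $B$ with ${}_AA_B$ (i.e.\ apply $A\otimes_B(-)$), which sends ${}_BA\otimes_B A_B \mapsto {}_AA\otimes_B A\otimes_B A_B$ and ${}_BA_B \mapsto {}_AA\otimes_B A_B\cong {}_AE_B$ wait — rather, reconstruct the split $(E,B)$-epi $E^n\twoheadrightarrow E\otimes_A E$ by applying the inverse Morita functor $A\otimes_B(-)$ to the split $(B,B)$-epi coming from \eqref{eq: endresult}, and then check this lands in the right category; since $A\otimes_B A\cong E$ this is a formal consequence once the left-hand direction's dictionary is established. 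So essentially all the work is in one careful lemma identifying the Morita dictionary, after which both implications are short.
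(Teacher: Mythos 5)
Your overall strategy is the paper's own: translate right d-3 of the tower $\lambda(B) \subseteq \lambda(A) \subseteq E$ into ${}_EE \otimes_A E_B \oplus Q \cong {}_EE_B^n$, substitute $E \cong A \otimes_B A$ to obtain ${}_EA \otimes_B A \otimes_B A_B \oplus Q \cong {}_E(A \otimes_B A^n)_B$, pass through the Morita equivalence between $E$ and $B$ furnished by the progenerator $A_B$ (finitely generated projective from the Frobenius dual bases, generator by hypothesis) with context bimodules ${}_EA_B$ and ${}_B\Hom(A_B,B_B)_E = {}_B(A_B)^*_E$, and recover the converse by tensoring back with ${}_EA_B$ (which, as you suspect, needs no generator hypothesis).

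There is, however, a concrete error where you apply $(A_B)^* \otimes_E -$. The fact that resolves both of your ``wait'' moments is this: under the Frobenius isomorphism $E \cong A \otimes_B A$, left multiplication in $E$ corresponds to the \emph{natural} action $g \cdot (x \otimes y) = g(x) \otimes y$ on the outer left factor, because $g(xF(ya)) = g(x)F(ya)$ by right $B$-linearity of $g$. Hence each left $E$-module $A \otimes_B \cdots \otimes_B A$ ($k$ factors) carries the action on its leftmost factor, and $(A_B)^* \otimes_E (A \otimes_B \cdots \otimes_B A) \cong B \otimes_B (A \otimes_B \cdots \otimes_B A)$ with $k-1$ factors remaining, as $B$-$B$-bimodules, since $(A_B)^* \otimes_E A \cong B$ by evaluation. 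Applied to the displayed $(E,B)$-isomorphism this lands \emph{directly} on eq.~(\ref{eq: endresult}); your stated ``upshot'' (that ${}_B(A \otimes_B A \otimes_B A)_B$ is a summand of ${}_B(A \otimes_B A)^n{}_B$) is off by one tensor factor. Consequently your final reduction --- stripping one more copy of $A$ ``because $A_B$ is a generator'' --- is not only unnecessary but unjustified as stated: $A_B$ a generator gives $B_B \mid A^m_B$ as right modules, not $B \mid A^m$ as $B$-$B$-bimodules (that would require a split extension, which is not hypothesized), and ${}_BX \otimes_B A_B \mid {}_BY \otimes_B A_B$ does not formally cancel to ${}_BX_B \mid {}_BY^m{}_B$. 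Once the Morita computation is done correctly that extra step disappears and your argument closes exactly as the paper's does.
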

\begin{proof}
($\Rightarrow$) Let $E = \End A_B$.  By the Frobenius extension hypothesis on $A \supseteq B$, as $E$-$A$-bimodules $E \cong A \otimes_B A$ via the mapping in subsection~\ref{subsec-fe}.
Recall that $A \supseteq B$ is depth three if $B \subseteq A \into
E$ is depth three tower, i.e. ${}_EE \otimes_A E_B \oplus Q \cong {}_EE_B^n$ for some $E$-$B$-bimodule $Q$ and positive integer $n$.
Then by substitution
\begin{equation}
\label{eq: halfway}
{}_E A \otimes_B A \otimes_B A_B \oplus Q \cong {}_E A \otimes_B {A_B}^n.
\end{equation}
But $A_B$ is a progenerator by hypothesis, whence $B$ and $E$ are Morita equivalent algebras.   The context bimodule are ${}_B\Hom (A_B, B_B)_E$ (the right $B$-dual of $A$ denoted
by $(A_B)^*$) and ${}_EA_B$ with $B$-$B$-bimodule isomorphism $\Hom (A_B, B_B) \otimes_E A \stackrel{\cong}{\longrightarrow} B$
given by evaluation.
 Now tensor all components of eq.~(\ref{eq: halfway}) by ${}_B(A_B)^* \otimes_E -$
and cancel $B \otimes_B$ to obtain eq.~(\ref{eq: endresult}),
where of course $P = (A_B)^* \otimes_E Q$.

($\Leftarrow$) Tensor all components of eq.~(\ref{eq: endresult})
from the left  by
the natural bimodule ${}_EA_B$ given by $f \cdot a \cdot b = f(a)b$,
obtain eq.~(\ref{eq: halfway}), and reverse the argument above it.
This direction of proof does not make use of generator hypothesis.
\end{proof}

\begin{remark}
\begin{rm}
Since ${}_BA_B \oplus \Omega \cong {}_BA \otimes_B A_B$ is always the
case for some $B$-$B$-bimodule $\Omega$, the $B$-$B$-bimodules
$A \otimes_B A$ and $A$ are similar or H-equivalent under the conditions of the theorem:  thus their endomorphism algebras
are Morita equivalent.
By Theorem~\ref{th-d3}, left multiplication $B \into E$ is depth two.
There is a general Galois theory of depth two extensions which in this case specializes to total algebra $\End {}_BA \otimes_B A_B$ and base algebra $E^B \cong \End {}_BA_B$ as parts of a bialgebroid. It is interesting to note
that base and total algebras in this case are Morita equivalent.
\end{rm}
\end{remark}
Let $\Res = \Res^G_H$ denote restriction of $G$-modules to $H$-modules
in the corollary below, and $\Ind = \Ind^G_H$ denote induction of
$H$-modules to $G$-modules.
\begin{cor}
\label{cor-nonew}
A subgroup $H$ of a finite group $G$ is depth three if and only if
\begin{equation}
\label{eq: resind}
\bra \Res \Ind \Res \Ind \psi \| \chi \ket \leq n \bra \Res \Ind \psi \| \chi \ket
\end{equation}
for all irreducible characters $\psi, \chi$ of $H$.
\end{cor}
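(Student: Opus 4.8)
The plan is to read this statement off from Theorem~\ref{th-tensorsquare}, applied to the group algebra extension $A = \C[G] \supseteq \C[H] = B$. This is a free, hence Frobenius, extension with $A_B$ a generator, so that theorem's hypotheses hold. The crucial dictionary is: for a left $B$-module (i.e.\ $\C[H]$-module) $M$ one has $A \otimes_B M \cong \Res\Ind M$ and $A \otimes_B A \otimes_B M \cong \Res\Ind\Res\Ind M$, since $A \otimes_B M = \C[G]\otimes_{\C[H]}M = \Ind M$ viewed again through $\Res$. Equivalently, the $B$-$B$-bimodule $A$ represents the endofunctor $\Res\Ind$ of left $B$-modules, and $A \otimes_B A$ represents $\Res\Ind\Res\Ind$.

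For the forward direction, assume $H < G$ is depth three. By Theorem~\ref{th-tensorsquare} there are a $B$-$B$-bimodule $P$ and $n \in \N$ with ${}_BA\otimes_B A_B \oplus P \cong {}_BA_B^{\,n}$. Tensoring this isomorphism on the right over $B$ with an irreducible $H$-character $\psi$ (a simple left $B$-module) gives an isomorphism $\Res\Ind\Res\Ind\,\psi \oplus (P\otimes_B\psi) \cong (\Res\Ind\,\psi)^{n}$ of $H$-modules; comparing the multiplicity of an arbitrary $\chi \in \mtr{Irr}(H)$ on both sides yields eq.~(\ref{eq: resind}).

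For the converse, assume eq.~(\ref{eq: resind}) for all irreducible $\psi,\chi$. Unique decomposition into simples of finite-dimensional semisimple modules then produces a monic natural transformation $\Res\Ind\Res\Ind \stackrel{\cdot}{\into} n\,\Res\Ind$ of endofunctors of the category of left $B$-modules, i.e.\ by the dictionary above a monic natural transformation $(A\otimes_B A)\otimes_B - \stackrel{\cdot}{\into} A^{n}\otimes_B -$. Evaluating at the regular module ${}_BB$ gives an injective left $B$-module map $A\otimes_B A \into A^{n}$, and naturality with respect to the right multiplications $\rho_b\colon{}_BB\to{}_BB$ ($b\in B$) shows it is in fact a $B$-$B$-bimodule monomorphism --- the same device as in the proof of Proposition~\ref{prop-d2inclusion}. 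Since $B$ and $B^{\rm op}$ are semisimple $\C$-algebras, $B\otimes_{\C}B^{\rm op}$ is semisimple (Maschke for $H\times H$), so the monomorphism splits and ${}_BA\otimes_B A_B \oplus P \cong {}_BA_B^{\,n}$ for some $B$-$B$-bimodule $P$; the ($\Leftarrow$) half of Theorem~\ref{th-tensorsquare} then gives depth three. (Alternatively one may avoid bimodules: with $\mathcal{S} = MM^{t}$ one has $\bra\Res\Ind\,\psi_i\|\psi_k\ket = \mathcal{S}_{ik}$ and $\bra\Res\Ind\Res\Ind\,\psi_i\|\psi_k\ket = (\mathcal{S}^{2})_{ik}$, so eq.~(\ref{eq: resind}) becomes $\mathcal{S}^{2}\le q\mathcal{S}$, which characterizes depth three by Theorem~\ref{th-hd} with $m = 1$.)

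The step I expect to be the main obstacle is the converse passage from the numerical inequality to a genuine split $B$-$B$-bimodule inclusion: one must package the multiplicity bound correctly as a natural transformation of the relevant endofunctors, evaluate it at the regular bimodule, and invoke semisimplicity of $B\otimes_{\C}B^{\rm op}$ to split it. The forward direction and the identifications $A\otimes_B - \simeq \Res\Ind$, $A\otimes_B A\otimes_B - \simeq \Res\Ind\Res\Ind$ are routine.
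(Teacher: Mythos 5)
Your proof is correct and follows essentially the paper's own route: the ($\Rightarrow$) half is exactly the paper's argument (tensor eq.~(\ref{eq: endresult}) of Theorem~\ref{th-tensorsquare} with a simple $B$-module and compare multiplicities of $\chi$), while your ($\Leftarrow$) half supplies, via the natural-transformation-plus-Maschke device already used in Proposition~\ref{prop-d2inclusion}, precisely the detail the paper's two-line proof leaves implicit, and correctly feeds it into the generator-free direction of Theorem~\ref{th-tensorsquare}. Your parenthetical alternative, rewriting \equref{resind} as $\mathcal{S}^2 \le q\,\mathcal{S}$ and invoking Theorem~\ref{th-hd} with $m=1$, is also valid and arguably the quickest route.
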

\begin{proof}
Note that the corresponding complex group algebras $A \supseteq B$
satisfy the conditions of the theorem.
One arrives at the condition on inner products of characters by
tensoring a simple $B$-module $V$ by the components in eq.~(\ref{eq: endresult}).  Of course, whatever simple $B$-module components of
${}_BA \otimes_B A \otimes_B V$ has, also ${}_BA^n \otimes_B V$ has.
\end{proof}

For example, from the character tables of the permutation groups
$S_4$ and $S_5$ \cite{FH} we compute the induction-restriction table by restricting irreducible characters on $S_5$, given below in matrix form
(with first column and row corresponding to trivial characters):

$$ M = \left( \begin{array}{ccccccc}
1 & 0 & 1 & 0 & 0 & 0 & 0 \\
0 & 1 & 0 & 1 & 0 & 0 & 0 \\
0 & 0 & 0 & 0 & 0 & 1 & 1 \\
0 & 0 & 1 & 0 & 1 & 1 & 0 \\
0 & 0 & 0 & 1 & 1 & 0 & 1
\end{array}
\right) $$

Let $\eta_i \in \mbox{\rm Irr}(H)$ and $\chi_j \in \mbox{\rm Irr}(G)$
 ($(i,j) \in \underline{5} \times \underline{7}$).  Then from row 1, $\eta_1^G = \chi_1 + \chi_3$,
$\eta_1^G\downarrow_H = 2\eta_1 + \eta_4$ and finally
$\eta_1^G\downarrow_H\uparrow^G\downarrow_H = 5\eta_1 +\eta_3 + 5\eta_4 + \eta_5$.  Note $\bra \eta_1^G\downarrow_H\uparrow^G\downarrow_H | \eta_3 \ket = 1 \not \leq n \bra  \eta_1^G\downarrow_H | \eta_3 \ket = 0$ for all
positive integers $n$,
whence $S_4$ is not a D3 subgroup in $S_5$.

Computing the $5 \times 5$ matrix $\mathcal{S} = MM^t$, we may compute
that the matrix $\mathcal{S}^3$ is strictly positive, so that $S_4$ is  a depth seven subgroup in $S_5$ by  Theorem~\ref{th-hd} and its corollary.
(Observing the pattern, we might conjecture at this point that the canonical subgroup $S_n < S_{n+1}$ has depth $2n-1$.)

\subsection{Depth three quasi-bases}

The condition~(\ref{eq: endresult}) for a depth three extension
has an interpretation in terms of split epis, including the canonical split epis of a product.  This should give us depth three condition
in terms of quasi-bases somewhat similar to dual bases for projective modules.  Meanwhile the Frobenius hypothesis on
extension $A \supseteq B$ is needed
to reduce the quasi-bases to simplest terms.  Suppose $F$
is a Frobenius homomorphism $A \rightarrow B$ with dual bases
$\{x_i \}$ and $\{ y_i \}$ in $A$.

\begin{theorem}
\label{th-d2qb}
Suppose $A \supseteq B$ is a Frobenius extension where $A_B$ is
a generator.  Then $A \supseteq B$ is a depth three extension
if and only if there are elements $u_i, t_i \in (A \otimes_B A \otimes_B A)^B$  such that for all $x,y \in A$,
\begin{equation}
\label{eq: d3qb}
x \otimes_B y = \sum_{i=1}^n t^1_i \otimes_B t^2_i F(t^3_i u^1_i F(u^2_i F(u^3_i x)y))
\end{equation}
where $u = u^1 \otimes u^2 \otimes u^3$ is Sweedler notation that
suppresses a possible summation over simple tensors.
\end{theorem}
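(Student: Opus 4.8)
The plan is to run the derivation of the D2 quasi-bases~(\ref{eq: d2qb}) one level up, with the tensor-square $A\otimes_B A$ in the role that $A$ plays there, using Theorem~\ref{th-tensorsquare} to keep everything at the level of $(B,B)$-bimodules. Since $A_B$ is a generator, that theorem says $A\supseteq B$ is depth three if and only if $A\otimes_B A$ is, as a $(B,B)$-bimodule, a direct summand of ${{}_BA_B}^n$ for some $n$; equivalently, iff there is a split $(B,B)$-bimodule epimorphism $\pi\colon {{}_BA_B}^n\twoheadrightarrow{}_BA\otimes_B A_B$ with section $\sigma$. Exactly as in the derivation of~(\ref{eq: d2qb}), composing $\pi$ and $\sigma$ with the $n$ canonical projections and injections of the direct sum yields maps $f_i\in\Hom({}_BA_B,{}_B(A\otimes_B A)_B)$ and $g_i\in\Hom({}_B(A\otimes_B A)_B,{}_BA_B)$, $i=1,\dots,n$, with $\sum_{i=1}^n f_i\circ g_i=\id_{A\otimes_B A}$; conversely any such finite family assembles into a split $(B,B)$-bimodule epimorphism $A^n\to A\otimes_B A$ (this converse needs no generator hypothesis, just as in Theorem~\ref{th-tensorsquare}). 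So the whole content of the theorem is a dictionary between these bimodule maps and elements of $(A\otimes_B A\otimes_B A)^B$.

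That dictionary consists of two parametrizations, which are the technical heart. First, the assignment $t\mapsto\bigl(a\mapsto t^1\otimes_B t^2F(t^3a)\bigr)$ should be a bijection from $(A\otimes_B A\otimes_B A)^B$ onto $\Hom({}_BA_B,{}_B(A\otimes_B A)_B)$, with inverse $\theta\mapsto\sum_j\theta(x_j)\otimes_B y_j$. I would prove this just as the D2 quasi-bases~(\ref{eq: d2qb}) are obtained: $A_B$ is finitely generated projective (a Frobenius property), $F$ identifies $\Hom(A_B,B_B)$ with ${}_BA$, the hom-tensor relation peels off the third tensor factor, and one passes to $B$-central parts; the Frobenius identities $\sum_j F(ax_j)y_j=a=\sum_j x_jF(y_ja)$ and the $A$-centrality in $A\otimes_B A$ of $\sum_j x_j\otimes_B y_j$ handle the bookkeeping, the latter being precisely what makes the displayed formula respect $B$-centrality and left $B$-linearity. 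Second, the assignment $u\mapsto\bigl(x\otimes_B y\mapsto u^1F(u^2F(u^3x)y)\bigr)$ should be a bijection from $(A\otimes_B A\otimes_B A)^B$ onto $\Hom({}_B(A\otimes_B A)_B,{}_BA_B)$. The cleanest route is to factor this through the bijection $\Hom({}_B(A\otimes_B A)_B,{}_BA_B)\cong\Hom({}_BA_B,{}_BE_B)$ given by $g\mapsto\bigl(x\mapsto g(x\otimes_B-)\bigr)$, where $E=\End A_B$ carries the $(A,A)$-bimodule structure restricted to $(B,B)$, followed by the Frobenius isomorphism $E\cong A\otimes_B A$ of subsection~\ref{subsec-fe} and the first parametrization; a short computation using $\sum_j F(wx_j)y_j=w$ shows that under these identifications $u^1F(u^2F(u^3x)y)$ is sent to $u^1\otimes_B u^2F(u^3x)$, i.e.\ the two parametrizations are matched up, so the second assignment is bijective as well.

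Granting the dictionary, let $t_i$ and $u_i$ in $(A\otimes_B A\otimes_B A)^B$ correspond to $f_i$ and $g_i$. Evaluating $\sum_i f_i\circ g_i=\id_{A\otimes_B A}$ at $x\otimes_B y$ and substituting the two formulas yields exactly~(\ref{eq: d3qb}). For the converse direction, given $t_i,u_i\in(A\otimes_B A\otimes_B A)^B$ satisfying~(\ref{eq: d3qb}), the two formulas define $(B,B)$-bimodule maps $f_i\colon A\to A\otimes_B A$ and $g_i\colon A\otimes_B A\to A$ (well-definedness over $\otimes_B$ and right $B$-linearity come from the properties of $F$, left $B$-linearity from the $B$-centrality of $t_i$ and $u_i$), and~(\ref{eq: d3qb}) reads $\sum_i f_i\circ g_i=\id_{A\otimes_B A}$, exhibiting $A\otimes_B A$ as a $(B,B)$-bimodule direct summand of $A^n$; Theorem~\ref{th-tensorsquare} then gives depth three.

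The only real obstacle I anticipate is organizational: pinning down the two parametrizations with the homomorphism $F$ and the suppressed internal summations (Sweedler notation) in the correct positions, and checking that these assignments land in --- and exhaust --- the spaces of $(B,B)$-bimodule maps, where the $A$-centrality of $\sum_j x_j\otimes_B y_j$ and the left $B$-linearity of $F$ do the essential work and a side error is easiest to commit. No idea beyond the D2 quasi-bases construction and Theorem~\ref{th-tensorsquare} is needed.
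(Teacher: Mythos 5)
Your proposal is correct and takes essentially the same route as the paper's proof: both reduce via Theorem~\ref{th-tensorsquare} to splitting $\id_{A\otimes_B A}$ through ${{}_BA_B}^n$ as $\sum_i f_i\circ g_i$, and then parametrize $\Hom({}_BA_B,{}_BA\otimes_B A_B)$ and $\Hom({}_BA\otimes_B A_B,{}_BA_B)$ by $(A\otimes_B A\otimes_B A)^B$ using the Frobenius isomorphism $\Coind\cong\Ind$, the hom-tensor relation and $E\cong A\otimes_B A$, exactly as you describe. Your converse, assembling the $t_i$ and $u_i$ into an explicit split $(B,B)$-bimodule epimorphism $A^n\to A\otimes_B A$, is also precisely the paper's argument.
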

\begin{proof}
($\Rightarrow$) First note from eq.~(\ref{eq: endresult}) that there are mappings \newline
 $f_i \in \Hom ({}_BA_B, {}_BA \otimes_B A_B)$ and $g_i \in \Hom ({}_BA \otimes_B A_B, {}_BA_B)$ such that $$\sum_{i=1}^n f_i \circ g_i = \id_{A \otimes_B A}.$$

Next recall that for any $B$-module $M$, $\Coind M \cong \Ind M$
for a Frobenius extension $A$ over $B$ \cite{NEFE};
I.e., there is a natural $A$-module isomorphism $\Hom (A_B, M_B) \cong
M \otimes_B A$ via $f \mapsto \sum f(x_i) \otimes y_i$
with inverse $m \otimes a \mapsto mF(a-)$.
Applied to $M = A \otimes_B A$, this restricts to
$\Hom ({}_BA_B, {}_BA \otimes_B A_B) \cong (A \otimes_B A \otimes_B A)^B$ via $f \mapsto \sum_i f(x_i) \otimes y_i$ with inverse
\begin{equation}
t \longmapsto t^1 \otimes t^2 F(t^3-).
\end{equation}

  Next  apply the hom-tensor relation and the Frobenius isomorphism
between endomorphism ring and tensor-square of extension:
$$\Hom({}_BA\otimes_B A_B, {}_BA_B) \cong \Hom (A_B, E_B)^B $$
$$\ \ \ \  \cong
\Hom ({}_BA_B, {}_BA \otimes_B A_B) \cong (A \otimes_B A \otimes_B A)^B.$$
Following the isomorphisms, the forward composite mapping is
given by $g \mapsto \sum_{i,j} g(x_i \otimes x_j) \otimes y_j \otimes y_i$ with inverse given by
\begin{equation}
u \longmapsto (x \otimes y \mapsto u^1F(u^2 F(u^3x)y))
\end{equation}
for all $u \in (A \otimes_B A \otimes_B A)^B$, $x,y \in A$.

Now suppose the mappings we begin with $f_i \mapsto t_i$ and $g_i \mapsto u_i$ in \newline
 $(A \otimes_B A \otimes_B A)^B$ via isomorphisms displayed above.  Then eq.~(\ref{eq: d3qb}) results.

($\Leftarrow$) Define a split $B$-$B$-bimodule epimorphism
$A^n \rightarrow A \otimes_B A$ by \newline
 $(a_1,\ldots,a_n) \mapsto
\sum_{i=1}^n t_i^1 \otimes t_i^2F(t_i^3 a_i)$ with section
$A \otimes_B A \rightarrow A^n$ given by
 $x \otimes y \mapsto$ \newline $(u_i^1F(u_i^2F(u_i^3 x)y))_{i=1,\ldots,n}$.
\end{proof}

For example, a left depth two quasi-bases $t_i \in (A \otimes_B A)^B$
and $\beta_i \in \End {}_BA_B$ for $A \supseteq B$
satisfy $x \otimes y = \sum_{i=1}^n t_i \beta_i(x)y$
for all $x,y \in A$.  If $A$ is Frobenius extension of $B$, then
$\End {}_BA_B \cong (A \otimes_B A)^B$ via $\alpha \mapsto
\sum_i \alpha(x_i) \otimes y_i$ with inverse $t \mapsto t^1 F(t^2 -)$.
Let $u_i \in (A \otimes_B A)^B$ satisfy $u_i^1 F(u_i^2-) = \beta_i$.
Then
\begin{equation}
\{ \sum_j t_i^1 \otimes_B t_i^2x_j \otimes_B y_j \}_{i=1,\ldots,n}
\ \ \{ \sum_j x_j \otimes_B y_ju_i^1 \otimes_B u_i^2 \}_{i=1,\ldots,n}
\end{equation}
are D3 quasi-bases, because
$$ \sum_{i,j,k} t_i^1 \otimes t_i^2 x_j F(y_j x_k F(y_k u_i^1 F(u_i^2 x)y ))
= \sum_{i,k} t_i^1 \otimes t_i^2 x_k F(y_k u_i^1 F(u_i^2x)y) $$
$$ =
\sum_i t_i^1 \otimes t_i^2 u_i^1 F(u_i^2 x)y = x \otimes y. $$


\section{Hall subgroup in Frobenius group is depth three}

A Frobenius group is a finite group $G$ with nontrivial normal subgroup $M$ (called the Frobenius kernel) which contains the centralizer of each of its nonzero elements: $C_G(\{x\}) \subseteq M$
for each $x \in M^*$ \cite{I, S}.  This is equivalent to $G$ having a Hall subgroup, or Frobenius complement, $H$ such that $G = MH$,
$M \cap H = \{e \}$, $H \cap H^x = \{ e \}$
where $H^x = x^{-1}Hx$ for any $x \in G - H$; in addition,
$ M = G - \cup_{x \in G} x^{-1}H^* x$.
The Hall subgroup $H$ is not normal in $G$ (and therefore not depth two in the terms of this paper).  We will see below that $H < G$ represents a nontrivial class of examples of depth three subgroup.

For example, the permutation group $S_3$ is a Frobenius group with kernel $M = \bra (1 2 3) \ket$ and  $H = S_2 = \bra(1 2)\ket$ or either of the two subgroups
$\bra (23) \ket$ or $\bra (13) \ket$ are Hall subgroups.

\begin{theorem}
Let $G$ be a Frobenius group with Hall subgroup $H$.  Then $H$
is depth three subgroup of $G$.
\end{theorem}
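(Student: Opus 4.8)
The plan is to verify the depth-three criterion of Proposition~\ref{prop-cube}, namely to exhibit a positive integer $n$ with $\mathcal{S}^3 \leq n\mathcal{S}$ for the symmetric matrix $\mathcal{S} = MM^t$ associated to the inclusion $\C[H] \subseteq \C[G]$. By formula~\equref{sformula}, the $(i,j)$ entry of $\mathcal{S}$ is $\bra \psi_i^G | \psi_j^G \ket$ for $\psi_i,\psi_j \in \mbox{\rm Irr}(H)$. The cleanest route is to identify exactly when this entry is nonzero and then invoke Corollary~\ref{cor-dn} (or its $m=1$ case, Corollary~\ref{cor-pos}): if $\mathcal{S}$ is strictly positive, we are done immediately; if not, it suffices to show some power $\mathcal{S}^m$ is strictly positive.

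First I would compute $\bra \psi_i^G | \psi_j^G \ket$ using Mackey's formula, exactly as in the proof of Theorem~\ref{d3grptw}:
$$
\bra \psi_i^G | \psi_j^G \ket_G \;=\; \sum_{HgH \in H\backslash G/H} \bra \psi_i\!\downarrow_{H \cap {}^gH} \, | \, {}^g\psi_j\!\downarrow_{H \cap {}^gH}\ket_{H \cap {}^gH}.
$$
Now the defining property of a Frobenius complement is that $H \cap {}^gH = \{e\}$ for every $g \in G - H$. Therefore, for the double coset of $g=e$ we get the full inner product $\bra \psi_i | \psi_j\ket_H = \delta_{ij}$, while for every other double coset the intersection is trivial and the term is simply $\psi_i(e)\psi_j(e) = \deg\psi_i \cdot \deg\psi_j > 0$. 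Hence
$$
\bra \psi_i^G | \psi_j^G \ket \;=\; \delta_{ij} \;+\; (|H\backslash G/H| - 1)\,\deg\psi_i\,\deg\psi_j,
$$
and since $H$ is not normal we have $|H\backslash G/H| \geq 2$, so every entry of $\mathcal{S}$ is strictly positive. (One can also phrase this as: $\mathcal{S} = I + (r-1) dd^t$ where $d$ is the column vector of degrees and $r = |H\backslash G/H|$.)

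With $\mathcal{S}$ strictly positive, Corollary~\ref{cor-pos} gives at once that $H$ is a depth three subgroup of $G$. The main point requiring care is the bookkeeping in the Mackey computation — specifically that the only double coset contributing a $\psi_i\psi_j$ cross-term structure deviating from the generic rank-one piece is the identity coset, which uses $H \cap H = H$, and that all remaining intersections are trivial by the Frobenius hypothesis $H \cap H^x = \{e\}$ for $x \notin H$. I expect no genuine obstacle here, only the need to translate the group-theoretic hypotheses on $G$ faithfully into the double-coset sum; once $\mathcal{S}$ is seen to be entrywise positive the conclusion is immediate from the earlier corollary, and in fact this shows depth exactly three since $H$ is not normal and hence not depth two.
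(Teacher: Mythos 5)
Your proposal is correct and is essentially the paper's own argument: the same Mackey--Frobenius computation showing $\bra \psi^G \,|\, \chi^G\ket = \delta_{\psi\chi} + (r-1)\deg\psi\,\deg\chi > 0$ for all $\psi,\chi \in \mbox{\rm Irr}(H)$, using $H \cap H^x = \{e\}$ for $x \notin H$. The paper packages the conclusion via Corollary~\ref{cor-nonew} rather than Corollary~\ref{cor-pos}, but it explicitly notes at the end of its proof that the route through eq.~(\ref{eq: sformula}) and Corollary~\ref{cor-pos} --- the one you chose --- works equally well.
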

\begin{proof}
 From the defining condition~(\ref{eq: resind}), we easily find
a positive integer $n$ if $ \bra \Res \Ind \psi | \chi \ket > 0$
for all irreducible characters $\psi, \chi$ of $H$.   We compute using Mackey subgroup theorem \cite[p. 74]{I} and Frobenius reciprocity, where $T$ denotes a set of $n$ double coset representative $\{ e = g_1, g_2,\ldots,g_n \}$:
$$  \bra  \psi^G \!\downarrow_H |\, \chi \ket  = \sum_{t \in T} \bra (\psi^t \!\downarrow_{H^t \cap H})\!\uparrow^H, \chi \ket  =
\sum_{t \in T} \bra \psi^t \!\downarrow_{H^t \cap H} |\, \chi\!\downarrow_{H^t \cap H} \ket \geq n-1 $$
since $H^t \cap H = \{ e \}$ for each $t \neq g_1$.  Indeed it is easy to check that
$$\bra \psi^G\downarrow_H | \chi \ket  = (n-1) (\deg \psi)( \deg \chi)$$
if $\psi \neq \chi$ and equals $1 + (n-1)(\deg \psi)^2$ if
$\chi = \psi$.
\end{proof}

For example the subgroup $S_2$ in $S_3$ has two double coset reprentatives, both irreducible characters are linear, and the values
$\bra  \psi^G\downarrow_H | \chi \ket  = \bra \psi^G | \chi^G \ket$ are 1 on the off-diagonal and 2 on the diagonal,
 the coefficients of the matrix $\mathcal{S}$ in eq.~(\ref{eq: s-matrix}).
 The proof of the theorem also follows from eq.~(\ref{eq: sformula}), Corollary~\ref{cor-pos} and Mackey's theorem.

\subsection{Acknowledgements}
The second author is grateful to David Harbater and Gestur Olafsson for discussions related to this paper.

\end{document}